\def\av{\mathbf{a}}
\def\uv{\mathbf{u}}
\def\vv{\mathbf{v}}
\def\am{\mathbf{A}}
\def\bm{\mathbf{B}}
\def\cm{\mathbf{C}}
\def\qm{\mathbf{Q}}
\def\xm{\mathbf{X}}
\def\zm{\mathbf{Z}}
\def\tm{\mathbf{T}}
\def\dm{\mathbf{D}}
\def\ym{\mathbf{Y}}
\def\um{\mathbf{U}}
\def\vm{\mathbf{V}}
\def\wm{\mathbf{W}}
\def\prox{\mathrm{prox}}
\def\rk{\mathrm{rank}}
\def\dg{\mathrm{diag}}
\def\sp{\mathrm{supp}}
\def\xt{\mathscr{X}}
\def\zt{\mathscr{Z}}
\DeclareMathOperator*{\argmin}{argmin}
\theoremstyle{plain}
\newtheorem{theorem}{Theorem}[section]
\newtheorem{lemma}[theorem]{Lemma}
\theoremstyle{definition}
\newtheorem{definition}[theorem]{Definition}
\theoremstyle{remark}
\title{Strictly Low Rank Constraint Optimization \\ --- An Asymptotically $\mathcal{O}(\frac{1}{t^2})$ Method}
\author{
	Mengyuan Zhang, Kai Liu\\
	\texttt{\{mengyuz, kail\}@clemson.edu}\\
	Clemson University\\
}
\begin{document}

\maketitle

\begin{abstract}
We study a class of non-convex and non-smooth problems with \textit{rank} regularization to promote sparsity in optimal solution. We propose to apply the proximal gradient descent method to solve the problem and accelerate the process with a novel support set projection operation  on the singular values of the intermediate update. We show that our algorithms are able to achieve a convergence rate of $O(\frac{1}{t^2})$, which is exactly same as Nesterov's optimal convergence rate for first-order methods on smooth and convex problems. Strict sparsity can be expected and the support set of singular values during each update is monotonically shrinking, which to our best knowledge, is novel in momentum-based algorithms.

\end{abstract}

\section{Introduction}\label{sec:intro}
Sparsity-induced optimization has achieved great success in many data analyses, and low-rank regularization is a powerful tool to impose sparsity. 
In this paper, we study the class of non-convex and non-smooth sparse learning problems in discrete space presented as follows:
\begin{equation}\label{eq:fx}
    \min F(\xm) = g(\xm) + h(\xm),
\end{equation}
where $\xm \in \mathbb{R}^{n \times k}, h(\xm) = \lambda\cdot rank (\xm) \ (\lambda > 0)$.
Countless problems in machine learning~\cite{shang2017nonnegative, su2020low}, computer vision~\cite{haeffele2014structured, he2015total}, and signal processing~\cite{dong2014compressive, zhou2014regularized} naturally fall into this template, including but not limited to rank regularized matrix factorization for recommendation, image restoration and clustering, compressive sensing, and multi-task learning. 

Besides the efforts trying to relax the $rank$ regularization with the nuclear norm, which is the tightest convex envelop, a class of non-convex optimization
algorithms has been developed to solve rank-regularized problems with vanilla objectives. The benefit is obvious that it will bring strict sparse solution instead of low energy of singular values. For example, the cardinality constraint is studied for $M$-estimation problems by Iterative Hard-Thresholding (IHT) method~\cite{blumensath2008iterative}. In addition, the alternating direction method of multipliers is also explored to apply on the rank-regularized problem~\cite{qu2023efficient}. Moreover, general iterative shrinkage and thresholding algorithm has been proposed to solve non-convex sparse regularization problems~\cite{gong2013multiple}. We refer the readers to the papers aforementioned and the references therein. In this paper, we use the proximal gradient descent method to obtain a sparse sub-optimal solution for Eq.~(\ref{eq:fx}). Also, with a \textit{support set projection} operation on the singular values of the matrix $\xm$, our proposed algorithm can be accelerated with a faster convergence rate. We explicitly list our contributions as following: 
\begin{itemize}
	\item We show that with the proximal gradient descent method, the sequence obtained converges to a critical point with \textit{zero} gradient, at a convergence rate of $F(\xm^{t+1}) - F(\xm^*) \leq O(\frac{1}{t})$.
	\item We propose two accelerated proximal gradient descent methods (monotone and non-monotone decreasing) which can \textit{asymptotically} achieve Nesterov Accelerated Gradient (NAG) convergence rate ($O(\frac{1}{t^2})$) which originally is supposed for convex problems~\cite{nesterov2003introductory} and we give the rigorous proof.
	\item The proposed algorithms can indeed admit strict sparsity, as the support set of singular values in each update is a subset of the initialized set and keeps shrinking during the update. 
\end{itemize}

\section{Algorithms and Convergence Analysis}\label{sec:alg}
 Throughout this paper, we use uppercase bold letters for matrices, lowercase bold letters for vectors, and lowercase letters for scalars. We use superscript to represent the current iteration. $\sigma_i (\cdot)$ is  the $i$-th largest singular value of a matrix. $\sigma_{min} (\cdot)$ and $\sigma_{max} (\cdot)$ represent the smallest and the largest singular value of a matrix, respectively. $\sigma(\cdot)$ indicates the vector formed by the singular values of a matrix, and $\dg(\sigma(\cdot))$ denotes the diagonal matrix formed by the vector. $|\cdot|$ denotes the cardinality of a set, and supp$(\cdot)$ denotes the support of a vector. For a matrix $\xm \in \mathbb{R}^{n \times k}$, let $\sigma(\xm) = (\sigma_{max} (\xm), \sigma_2 (\xm), \dots, \sigma_{min} (\xm))$, then we can see rank$(\xm) = \|\sigma(\xm)\|_0$. 
 $S = \sp(\sigma(\xm^0))$, $\xm^0$ is the initialization. 
 The proximal mapping associated with $h(\cdot)$ is defined as $\textrm{prox}_h(\um) = \argmin_{\vm} h(\vm) + \frac{1}{2} \|\um-\vm\|^2_F$. 
We make some mild assumptions regarding $g(\xm)$:
1) g($\cdot)$ is convex, and $\nabla g(\cdot)$ has bounded singular value $\sigma_{max}(\nabla g(\xm))) \leq G$ for any $\xm$;
2) g($\cdot)$ has L-Lipschitz continuous gradient, $\| \nabla g(\xm_1) - \nabla g (\xm_2) \|_F \leq L\|\xm_1 - \xm_2\|_F$.
Regarding $g(\cdot)$, we set the squared loss $\|\ym - \dm \xm\|_F^2$ to demonstrate the algorithm for simplicity, where $\ym \in \mathbb{R}^{d \times k}, \dm \in \mathbb{R}^{d \times n},\xm \in \mathbb{R}^{n \times k}$. It is worth noting that the objective can be extended to other loss functions.
\subsection{Proximal Gradient Descent for Low-Rank Approximation}\label{subsec:PGD}
In the $t$-th iteration of the proximal gradient descent (PGD) method, gradient descent is applied on the squared loss function $g(\xm)$ to get
$\xm^t - s\nabla g(\xm^t)$, where $s \geq 0$ is the step size in gradient descent and $\frac{1}{s}$ is typically larger than the Lipschitz continuous constant $L$ of $g(\xm)$. After applying the proximal mapping on $\xm^t - s\nabla g(\xm^t)$ we can get $\xm^{t+1}$:
\begin{equation}\begin{split}\label{eq:proximal}
    \xm^{t+1} =  \prox_h (\xm^t - s\nabla g(\xm^t))  
    =  T_{\sqrt{2\lambda s}} (\xm^t - s\nabla g(\xm^t)).
\end{split}\end{equation}
$T_{\theta} (\cdot)$ is the singular value hard thresholding operator defined as 
$T_{\theta}(\qm) = \um \Sigma_{\theta} \vm^T$, where
$\qm = \um \Sigma \vm^T$ is the singular value decomposition and 
$ \Sigma_{\theta} (i,i) = \begin{cases}
      0,  &|\Sigma(i,i)| \leq \theta, \\
      \Sigma(i,i),  &\textrm{otherwise.} \\
     \end{cases}$

The optimization algorithm to minimize problem Eq.~(\ref{eq:fx}) by PGD is summarized in Algorithm~\ref{alg:1}.
\begin{algorithm}[h]
	\caption{Proximal Gradient Descent for the Rank-Regularized Problem}
	\label{alg:1}
	\begin{algorithmic}
		\STATE {\bfseries Input:} Initialization $\xm^0$, step size $s$, regularization parameter $\lambda$.
		\FOR{$t=0, \dots $} 
		\STATE Update $\xm^{t+1}$ according to Eq.~(\ref{eq:proximal})
		\ENDFOR
	\end{algorithmic}
\end{algorithm}
%
%

We present the analysis of the convergence rate of Algorithm~\ref{alg:1}. Before that, we first show the support set of the singular value vector shrinks, and then the rank of obtained solutions decreases, also the objective has a sufficient decrease during each update.
\begin{lemma}\label{lem:1}
If $s \leq \min \{\frac{2\lambda}{G^2}, \frac{1}{L}\}$, then 
$\sp (\sigma(\xm^{t+1})) \subseteq \sp (\sigma(\xm^{t})), \textrm{for } t \geq 0$,
and
$\rk(\xm^{t+1}) \leq  \rk(\xm^{t}), \textrm{for } t \geq 0$,
which means the support of the singular value vectors of the sequence $\{\xm^t\}_t$ shrinks, also the rank of the sequence $\{\xm^t\}_t$ decreases.
\end{lemma}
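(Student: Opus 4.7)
The plan is to use Weyl's inequality for singular values to bound how much the zero singular values of $\xm^t$ can grow after the gradient step, and then show that the step-size condition $s \leq \frac{2\lambda}{G^2}$ makes the hard-thresholding operator with threshold $\sqrt{2\lambda s}$ large enough to annihilate any singular value arising from an index outside $\sp(\sigma(\xm^t))$.

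First I would observe that because $\sigma(\cdot)$ lists the singular values in non-increasing order, the support $\sp(\sigma(\xm^t))$ is just $\{1,2,\dots,\rk(\xm^t)\}$, so the support-inclusion claim and the rank-monotonicity claim are equivalent, and it suffices to prove the former. Fix any index $i \notin \sp(\sigma(\xm^t))$, so $\sigma_i(\xm^t) = 0$. Applying the Weyl-type inequality $\sigma_i(\am+\bm) \leq \sigma_i(\am) + \sigma_1(\bm)$ with $\am = \xm^t$ and $\bm = -s\nabla g(\xm^t)$, together with the boundedness assumption $\sigma_{max}(\nabla g(\cdot)) \leq G$, yields
\begin{equation*}
\sigma_i\bigl(\xm^t - s\nabla g(\xm^t)\bigr) \;\leq\; \sigma_i(\xm^t) + s\,\sigma_{max}(\nabla g(\xm^t)) \;\leq\; sG.
\end{equation*}

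Next I would check that the step-size condition implies $sG \leq \sqrt{2\lambda s}$: squaring, this is $s^2 G^2 \leq 2\lambda s$, i.e.\ $s \leq \frac{2\lambda}{G^2}$, which is exactly what is assumed. Hence for every $i \notin \sp(\sigma(\xm^t))$ the corresponding singular value of $\xm^t - s\nabla g(\xm^t)$ lies at or below the thresholding level $\sqrt{2\lambda s}$, and by the definition of $T_{\sqrt{2\lambda s}}$ (which sets all such singular values to $0$) we obtain $\sigma_i(\xm^{t+1}) = 0$. This proves $\sp(\sigma(\xm^{t+1})) \subseteq \sp(\sigma(\xm^t))$, and because $\rk(\xm) = \|\sigma(\xm)\|_0 = |\sp(\sigma(\xm))|$, the rank inequality follows.

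There is no substantive obstacle; the only non-routine ingredient is knowing the right perturbation bound for singular values (Weyl), and the only delicate check is verifying that $s \leq \frac{2\lambda}{G^2}$ is \emph{exactly} the condition making the post-gradient singular value $sG$ fit under the proximal threshold $\sqrt{2\lambda s}$. The Lipschitz bound $s \leq \frac{1}{L}$ plays no role in this lemma and is included because it is needed for the sufficient-decrease and convergence-rate statements that follow.
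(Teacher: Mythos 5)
Your proof is correct and follows essentially the same route as the paper's: apply Weyl's inequality with $j=1$ to bound the singular values of $\xm^t - s\nabla g(\xm^t)$ at indices outside $\sp(\sigma(\xm^t))$ by $sG$, then verify $sG \leq \sqrt{2\lambda s}$ from $s \leq \frac{2\lambda}{G^2}$ so the hard-thresholding operator zeroes them out. Your added remarks --- that the support is a prefix $\{1,\dots,\rk(\xm^t)\}$ making the two claims equivalent, and that the $s \leq \frac{1}{L}$ condition is not used here --- are accurate clarifications the paper leaves implicit.
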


\begin{lemma}\label{lem:2}
With $s \leq \min \{\frac{2\lambda}{G^2}, \frac{1}{L}\}$, the sequence of the objective $\{F(\xm^t)\}_t$ is monotonically non-increasing, and the following inequality holds for all $t \geq 0$:
\begin{equation}
    F(\xm^{t+1}) \leq F(\xm^t) - (\frac{1}{2s} - \frac{L}{2})\|\xm^{t+1} - \xm^t\|^2_F.
\end{equation}
\end{lemma}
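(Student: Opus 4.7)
The plan is to combine two standard ingredients: the descent (quadratic upper bound) inequality coming from the $L$-smoothness of $g$, and the global optimality of $\xm^{t+1}$ as the minimizer of the proximal subproblem. The step-size budget $s\le\min\{2\lambda/G^2,\,1/L\}$ is used purely through the piece $s\le 1/L$, which is what makes the coefficient $\tfrac{1}{2s}-\tfrac{L}{2}$ nonnegative and hence gives monotone non-increase; the piece $s\le 2\lambda/G^2$ is only needed later (via Lemma~\ref{lem:1}) and plays no role here.

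The first step is to apply the standard $L$-smoothness descent inequality to write
\begin{equation*}
g(\xm^{t+1}) \le g(\xm^t) + \langle \nabla g(\xm^t),\,\xm^{t+1}-\xm^t\rangle + \tfrac{L}{2}\|\xm^{t+1}-\xm^t\|_F^2.
\end{equation*}
The second step is to use the definition $\xm^{t+1}=\prox_{sh}(\xm^t - s\nabla g(\xm^t))$, which makes $\xm^{t+1}$ a \emph{global} minimizer of $\vm\mapsto \tfrac{1}{2s}\|\vm-(\xm^t-s\nabla g(\xm^t))\|_F^2 + h(\vm)$. Evaluating the objective at $\vm=\xm^{t+1}$ and at $\vm=\xm^t$ and cancelling the common $\tfrac{s}{2}\|\nabla g(\xm^t)\|_F^2$ term yields
\begin{equation*}
h(\xm^{t+1}) \le h(\xm^t) - \langle \nabla g(\xm^t),\,\xm^{t+1}-\xm^t\rangle - \tfrac{1}{2s}\|\xm^{t+1}-\xm^t\|_F^2.
\end{equation*}
Adding the two displayed inequalities makes the inner-product terms cancel and produces exactly
\begin{equation*}
F(\xm^{t+1}) \le F(\xm^t) - \bigl(\tfrac{1}{2s}-\tfrac{L}{2}\bigr)\|\xm^{t+1}-\xm^t\|_F^2,
\end{equation*}
from which the monotone non-increase follows because $s\le 1/L$.

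The only nontrivial point is the global optimality used in the second step, since $h=\lambda\cdot\rk(\cdot)$ is nonconvex and even discontinuous. I would justify it by the standard unitary-invariance argument: the rank function depends only on the singular values, so by von Neumann's trace inequality the proximal subproblem is minimized by a matrix sharing the SVD factors of $\xm^t-s\nabla g(\xm^t)$, and the problem then decouples into scalar problems $\min_{\tau\ge 0}\tfrac{1}{2}(\tau-\sigma_i)^2 + s\lambda\,\mathbf{1}[\tau\ne 0]$, each solved exactly by hard thresholding at $\sqrt{2s\lambda}$. This is precisely the operator $T_{\sqrt{2\lambda s}}$ defining $\xm^{t+1}$, so the comparison inequality is legitimate and the proof is complete.
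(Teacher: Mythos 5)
Your proof is correct and follows essentially the same route as the paper's: the $L$-smoothness descent inequality combined with comparing the proximal subproblem's objective at $\vm=\xm^{t+1}$ and $\vm=\xm^t$, then adding the two bounds and invoking $s\le 1/L$. Your extra paragraph justifying the global optimality of the hard-thresholding solution for the nonconvex $h=\lambda\cdot\rk(\cdot)$ (via unitary invariance and the scalar decoupling) is a point the paper takes for granted, and it is a worthwhile addition rather than a deviation.
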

To eliminate the concern that very small step size is required to ensure Lemma~\ref{lem:1} with the prerequisite $s \leq \min \{\frac{2\lambda}{G^2}, \frac{1}{L}\}$, we show that $\frac{1}{L}$ is no larger than $\frac{2\lambda}{G^2}$ with high probability, thus the choice of step size $s$ in Lemma~\ref{lem:1} would not be smaller than $ \frac{1}{L}$ with high probability. 
\begin{theorem}\label{thm:1}
\cite{yang2020fast} \ Suppose $\dm \in \mathbb{R}^{d \times n} (n \geq d)$ is a random matrix with elements i.i.d. sampled from the standard Gaussian distribution \textit{N}(0,1), then 
\begin{equation}
    \textrm{Probability}(\frac{1}{L} \leq \frac{2\lambda}{G^2}) \geq 1-e^{-\frac{a^2}{2}} - ne^{-a},
\end{equation}
if 
$n \geq (\sqrt{d} + a + \sqrt{\frac{(d+2\sqrt{da}+2a)(x_0 + \lambda |S|))}{\lambda}})^2$,
where $x_0 = \|\ym - \dm \xm^0\|^2_F$, $S = \sp(\sigma(\xm^0))$, and $a$ can be chosen as $a_0 \textrm{log}n$ for $a_0 > 0$.
\end{theorem}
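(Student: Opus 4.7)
The plan is to combine two classical Gaussian concentration results and then close the argument with a short algebraic check built around the hypothesis on $n$. Since the statement is quoted from \cite{yang2020fast}, the plan is essentially to replay the main steps of that derivation.

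First, for the singular-value bound I would invoke the Davidson--Szarek / Vershynin extreme singular value estimate for a $d \times n$ Gaussian matrix with $n \geq d$: for any $a > 0$,
\begin{equation*}
\Pr(\sigma_{min}(\dm) \geq \sqrt{n} - \sqrt{d} - a) \geq 1 - e^{-a^2/2},
\end{equation*}
which is exactly what produces the first failure term $e^{-a^2/2}$. Second, each column of $\dm$ is a standard Gaussian in $\mathbb{R}^d$, so the squared norm of the $i$-th column is $\chi^2_d$-distributed. The Laurent--Massart tail bound gives
\begin{equation*}
\Pr(\|\dm_{:,i}\|^2 \leq d + 2\sqrt{d a} + 2a) \geq 1 - e^{-a},
\end{equation*}
and a union bound over the $n$ columns yields $\max_i \|\dm_{:,i}\|^2 \leq d + 2\sqrt{d a} + 2a$ with probability at least $1 - n e^{-a}$, which is the source of the second failure term.

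On the intersection of these two events (total failure probability at most $e^{-a^2/2} + n e^{-a}$), the assumption on $n$ rearranges to $(\sqrt{n} - \sqrt{d} - a)^2 \geq (d + 2\sqrt{d a} + 2a)(x_0 + \lambda|S|)/\lambda$, and the two concentration bounds combine to give
\begin{equation*}
\sigma_{min}(\dm)^2 \geq \frac{\max_i \|\dm_{:,i}\|^2 \, (x_0 + \lambda |S|)}{\lambda}.
\end{equation*}
The last step is to convert this spectral inequality into $1/L \leq 2\lambda/G^2$. From Lemma~\ref{lem:2}, the sequence $\{F(\xm^t)\}$ is non-increasing, so $g(\xm^t) \leq F(\xm^0) = x_0 + \lambda|S|$ along the entire trajectory; this is what permits $G = \sup_t \sigma_{max}(\nabla g(\xm^t))$ to be controlled by $x_0 + \lambda|S|$ and the column norms of $\dm$, while $L$ is dictated by the singular values of $\dm$ through $\nabla g(\xm) = 2 \dm^T(\dm \xm - \ym)$. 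Inserting the two concentration estimates into the ratio $G^2/(2\lambda L)$ drives it below $1$, which is the desired inequality.

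The main obstacle will be this final translation: bounding $\sigma_{max}(\nabla g(\xm^t))$ in terms of the column-wise data norms of $\dm$ and then fusing with Lemma~\ref{lem:2}'s descent bound so that the $(x_0 + \lambda|S|)$ factor cancels correctly against $L$. The two concentration inequalities themselves are textbook; the real content sits in this algebraic bridge from the singular value and column-norm estimates to the step size comparison, and that is precisely what \cite{yang2020fast} carries out.
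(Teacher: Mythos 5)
Your proposal follows essentially the same route as the paper's proof: the Davidson--Szarek bound supplies the $e^{-a^2/2}$ failure term for the extreme singular value, the Laurent--Massart $\chi^2_d$ tail plus a union bound over the $n$ columns supplies the $ne^{-a}$ term, and the hypothesis on $n$ is exactly the rearrangement $(\sqrt{n}-\sqrt{d}-a)^2 \geq (d+2\sqrt{da}+2a)(x_0+\lambda|S|)/\lambda$ that yields $2D^2(x_0+\lambda|S|)/\lambda \leq 2\sigma_{max}^2(\dm)$ on the intersection of the two events. The final bridge you flag as the remaining obstacle (identifying $L = 2\sigma_{max}^2(\dm)$ and controlling $G^2$ by $4D^2(x_0+\lambda|S|)$ via the non-increasing objective) is likewise left implicit in the paper, which stops at the same spectral inequality.
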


The sequence $\{\xm^t\}_t$ can be segmented into the following $|S|+1$ subsequences $\{\xt^{k'}\}_{k'=0}^{|S|}$ with the definition as follows:
\begin{equation}
    \xt^{k'} = \{\xm^t: |supp(\sigma(\xm^t))| = k', t \geq 0\}, 0 \leq k' \leq |S|. 
\end{equation}

With the definition defined in Definition~\ref{def:1}, the nonempty subsequences in $\{\xt^{k'}\}_{k'=0}^{|S|}$ form a disjoint cover of $\{\xt^{t}\}_t$ and they are in descending order of rank.

\begin{definition}\label{def:1}
Subsequences with shrinking support:
All the $K \leq |S|+1$ nonempty subsequences among $\{\xt^{k'}\}_{k'=0}^{|S|}$ are defined to be subsequences with shrinking support, denoted by $\{\xt^{k}\}_{k=1}^{K}$. The subsequences with shrinking support are ordered with decreasing support size of singular value vectors, i.e. $|\sp(\sigma(\xm^{t_2}))| < |\sp(\sigma(\xm^{t_1}))|$ for any $\xm^{t_1} \in \xt^{k_1}$ and $\xm^{t_2} \in \xt^{k_2}$ with any $1\leq k_1 < k_2 \leq K$.
\end{definition}

Based on the above definition, we have the following lemma about the property of subsequences with shrinking support:
\begin{lemma}\label{lem:3}
(a) All the elements of each subsequence $\xt^k$ ($k = 1, \dots, K)$ in the subsequences with shrinking support have the same support. In addition, for any $1 \leq k_1 < k_2 \leq K$ and any $\xm^{t_1} \in \xt^{k_1}$, and $\xm^{t_2} \in \xt^{k_2}$, we have $t_1 < t_2$ and $\sp(\sigma(\xm^{t_2})) \subset \sp(\sigma(\xm^{t_1}))$. 

(b) All the subsequences except for the last one, $\xt^k$ ($k = 1, \dots, K-1)$ have finite size, and $\xt^K$ have an infinite number of elements, and there exists some $t_0 \geq 0$ such that $\{\xm^t\}_{t=t_0}^\infty \subseteq \xt^K$.
\end{lemma}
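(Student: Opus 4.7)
The plan is to exploit Lemma~\ref{lem:1}, which gives $\sp(\sigma(\xm^{t+1})) \subseteq \sp(\sigma(\xm^t))$ for every $t \geq 0$. Setting $c_t := |\sp(\sigma(\xm^t))|$, this makes $(c_t)_{t \geq 0}$ a monotonically non-increasing sequence of non-negative integers, and essentially every claim in Lemma~\ref{lem:3} will reduce to a bookkeeping argument on this integer sequence together with the nested support inclusions.

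For part (a), first I would show that for any fixed cardinality $k'$, the index set $\{t : c_t = k'\}$ is a contiguous block (possibly empty). Indeed, if $t_1 < t_2 < t_3$ with $c_{t_1} = c_{t_3} = k'$ but $c_{t_2} \neq k'$, then monotonicity forces $c_{t_2} < k'$ and hence $c_{t_3} \leq c_{t_2} < k'$, contradicting $c_{t_3} = k'$. Within such a block, consecutive supports are nested and have equal cardinality, so they must actually coincide; iterating gives that all elements of a single $\xt^k$ share a common support. For the second assertion, take $1 \leq k_1 < k_2 \leq K$ and pick $\xm^{t_1} \in \xt^{k_1}$, $\xm^{t_2} \in \xt^{k_2}$. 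By Definition~\ref{def:1} we have $c_{t_1} > c_{t_2}$, and since $c_t$ is non-increasing this forces $t_1 < t_2$. Iterating Lemma~\ref{lem:1} along $t_1, t_1+1, \ldots, t_2$ yields $\sp(\sigma(\xm^{t_2})) \subseteq \sp(\sigma(\xm^{t_1}))$, and the inclusion is strict because the cardinalities differ.

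For part (b), since $(c_t)_{t \geq 0}$ is a monotonically non-increasing sequence of non-negative integers, it stabilizes in finite time at some limit $k^* \geq 0$. Let $t_0$ be the smallest index with $c_{t_0} = k^*$; then $c_t = k^*$ for all $t \geq t_0$, so the subsequence with shrinking support corresponding to cardinality $k^*$ contains the infinite tail $\{\xm^t\}_{t \geq t_0}$. Because $k^*$ is the smallest cardinality that appears, this subsequence is the last in the ordering of Definition~\ref{def:1}, namely $\xt^K$. Every earlier $\xt^k$ with $k < K$ corresponds to a strictly larger cardinality value, and by the contiguous-block property established in part (a) its indices lie entirely in $\{0, 1, \ldots, t_0 - 1\}$, so it is finite.

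The main obstacle is not technical but notational: one must keep straight two parallel indexings, the ``raw'' index $k' \in \{0, 1, \ldots, |S|\}$ parameterized by support cardinality, and the ``compressed'' index $k \in \{1, \ldots, K\}$ that enumerates only the nonempty $\xt^{k'}$ in decreasing order of cardinality. Once the block structure of same-cardinality indices is pinned down, everything else follows from Lemma~\ref{lem:1} and the elementary observation that a non-increasing sequence of non-negative integers stabilizes in finite time.
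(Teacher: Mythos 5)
Your proof is correct and rests on the same foundation as the paper's own argument, namely the support-shrinkage property of Lemma~\ref{lem:1} together with cardinality bookkeeping. The only real difference is in part (b), where you observe directly that the non-increasing integer sequence $c_t = |\sp(\sigma(\xm^t))|$ must stabilize in finite time, whereas the paper reaches the same conclusion via a longer proof by contradiction that explicitly constructs an infinite increasing index subsequence inside a putatively infinite $\xt^k$ with $k<K$; your version is cleaner but not substantively different.
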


The following theorem shows the convergence property of the sequence $\{\xm^{t}\}_t$:
\begin{theorem}\label{thm:2}
    Suppose $s \leq \min \{\frac{2\lambda}{G^2}, \frac{1}{L}\}$. and $\xm^*$ is a limit point of $\{\xm^{t}\}_{t=0}^{\infty}$, 
    and $\sigma(\xm^*)$ is a limit point of $\{\sigma(\xm^{t})\}_{t=0}^{\infty}$, 
    then the sequence $\{\xm^{t}\}_{t=0}^{\infty}$ generated by Algorithm~\ref{alg:1} converges to $\xm^*$,
    $\xm^*$ is a critical point of F($\cdot$), and $\sp(\sigma(\xm^*)) = S^*$, where $S^*$ is the support of any element in $\xt^K$.
    Moreover, there exists $t_1 \geq 0$ such that for all $m \geq t_1$, we have
    \begin{equation}
        F(\xm^{m+1}) - F(\xm^*) \leq \frac{1}{2s(m-t_1+1)}\|\xm^{t_1} - \xm^*\|_F^2.
    \end{equation}
\end{theorem}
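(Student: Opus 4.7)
My strategy is to reduce the analysis to the ``stabilized'' tail of the sequence---where the support of $\sigma(\xm^t)$ is fixed at $S^*$---and then adapt the convex proximal-gradient analysis on that tail, using that $h$ is effectively constant there.

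Setup step. By Lemma~\ref{lem:3}(b) there is $t_0\geq 0$ with $\{\xm^t\}_{t\geq t_0}\subseteq\xt^K$, so every iterate past $t_0$ has support $S^*$ and $h(\xm^t)\equiv\lambda|S^*|$. Telescoping the sufficient-decrease from Lemma~\ref{lem:2} and using that $F$ is bounded below yields $\sum_t\|\xm^{t+1}-\xm^t\|_F^2<\infty$, hence $\|\xm^{t+1}-\xm^t\|_F\to 0$. Passing to the limit along a convergent subsequence in $\xm^{t+1}=\prox_h(\xm^t-s\nabla g(\xm^t))$---with the singular-value hard-thresholding operator continuous at $\xm^*-s\nabla g(\xm^*)$ because support stabilization keeps the relevant singular values bounded away from the threshold $\sqrt{2\lambda s}$---gives the fixed-point identity $\xm^*=\prox_h(\xm^*-s\nabla g(\xm^*))$, i.e.\ the critical-point condition $0\in\nabla g(\xm^*)+\partial h(\xm^*)$; this also forces $\sp(\sigma(\xm^*))=S^*$.

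Rate step. For $t\geq t_1\geq t_0$, the equality $h(\xm^{t+1})=h(\xm^*)$ reduces the proximal optimality (comparing against $\vm=\xm^*$) to
\begin{equation*}
\tfrac{1}{2s}\|\xm^{t+1}-\xm^t\|_F^2+\langle\nabla g(\xm^t),\,\xm^{t+1}-\xm^*\rangle\leq\tfrac{1}{2s}\|\xm^*-\xm^t\|_F^2.
\end{equation*}
Combining with the $L$-smoothness descent for $g$ and the convexity inequality $g(\xm^t)-g(\xm^*)\leq\langle\nabla g(\xm^t),\xm^t-\xm^*\rangle$, and then manipulating via the three-point identity $2\langle\xm^t-\xm^{t+1},\xm^{t+1}-\xm^*\rangle=\|\xm^t-\xm^*\|_F^2-\|\xm^{t+1}-\xm^*\|_F^2-\|\xm^t-\xm^{t+1}\|_F^2$, my target is the Fej\'er-style telescoping
\begin{equation*}
F(\xm^{t+1})-F(\xm^*)\leq\tfrac{1}{2s}\bigl(\|\xm^t-\xm^*\|_F^2-\|\xm^{t+1}-\xm^*\|_F^2\bigr)
\end{equation*}
for $s\leq 1/L$. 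This has two payoffs. First, since the monotone sequence $F(\xm^t)$ decreases to $F(\xm^*)$ (continuity along the limit-point subsequence), so $F(\xm^{t+1})\geq F(\xm^*)$, the inequality yields Fej\'er monotonicity $\|\xm^{t+1}-\xm^*\|_F\leq\|\xm^t-\xm^*\|_F$, which combined with subsequential convergence to $\xm^*$ upgrades to full sequence convergence. Second, summing over $t\in\{t_1,\dots,m\}$ telescopes the right-hand side to $\tfrac{1}{2s}\|\xm^{t_1}-\xm^*\|_F^2$, and combining with the monotonicity of $F(\xm^t)$ from Lemma~\ref{lem:2} yields $(m-t_1+1)(F(\xm^{m+1})-F(\xm^*))\leq\tfrac{1}{2s}\|\xm^{t_1}-\xm^*\|_F^2$, as claimed.

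The main obstacle will be the Fej\'er-style telescoping inequality itself. The textbook convex-PGD derivation picks up the crucial $\tfrac{1}{s}\langle\xm^t-\xm^{t+1},\xm^{t+1}-\xm^*\rangle$ cross term from a subgradient of $h$ at $\xm^{t+1}$, which is unavailable here since $h=\lambda\cdot\rk$ is non-convex. The rescue is the support stabilization from Lemma~\ref{lem:3}: because $h(\xm^{t+1})-h(\xm^*)=0$ on the tail, the non-convex $h$ contributes nothing, and the projection-like geometry of the hard thresholding (together with both $\xm^{t+1}$ and $\xm^*$ lying in the same fixed-rank locus) can be used to supply the missing cross term, so the standard convex argument goes through.
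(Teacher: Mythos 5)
Your overall architecture matches the paper's: restrict to the tail $t\geq t_0$ where the support has stabilized at $S^*$ (Lemma~\ref{lem:3}), note that $h$ is constant there, and then run the convex proximal-gradient telescoping with $\vm=\xm^*$ to get the Fej\'er-type inequality $F(\xm^{t+1})-F(\xm^*)\leq\frac{1}{2s}(\|\xm^t-\xm^*\|_F^2-\|\xm^{t+1}-\xm^*\|_F^2)$ and hence the $O(1/t)$ rate. You also correctly identify the crux. The problem is that you do not actually close it. Writing $\bar{\xm}^{t+1}=\xm^t-s\nabla g(\xm^t)$, the term you need is
\begin{equation*}
\bigl\langle \bar{\xm}^{t+1}-\xm^{t+1},\ \xm^{t+1}-\xm^*\bigr\rangle\ \geq\ 0 .
\end{equation*}
Your stated proximal inequality (the function-value comparison $\frac{1}{2s}\|\xm^{t+1}-\bar{\xm}^{t+1}\|_F^2\leq\frac{1}{2s}\|\xm^*-\bar{\xm}^{t+1}\|_F^2$, using $h(\xm^{t+1})=h(\xm^*)$) combined with smoothness and convexity of $g$ only yields $F(\xm^{m+1})-F(\xm^*)\leq\frac{1}{2s}\|\xm^m-\xm^*\|_F^2$, with no $-\|\xm^{t+1}-\xm^*\|_F^2$ on the right; that does not telescope and gives no rate. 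The missing $-\frac{1}{2s}\|\xm^{t+1}-\xm^*\|_F^2$ is exactly the displayed cross term, and it is \emph{not} supplied by ``projection-like geometry'': $T_{\sqrt{2\lambda s}}$ is the prox of a non-convex function (equivalently a projection onto a non-convex union of low-rank strata), so the obtuse-angle variational inequality for convex projections is unavailable. What you get for free is only $\langle\bar{\xm}^{t+1}-\xm^{t+1},\xm^{t+1}\rangle=0$, since the discarded part $\bar{\xm}^{t+1}-\xm^{t+1}=\sum_{i\notin S^*}\sigma_i\uv_i\vv_i^T$ lives in singular directions of $\bar{\xm}^{t+1}$ orthogonal to those retained in $\xm^{t+1}$. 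The remaining piece $\langle\bar{\xm}^{t+1}-\xm^{t+1},\xm^*\rangle$ has no sign in general: $\xm^*$ having support $S^*$ constrains its rank, not the alignment of its singular subspaces with the discarded subspaces of $\bar{\xm}^{t+1}$. (For what it is worth, the paper's own Eq.~(\ref{eq:33}) makes exactly the same leap, via the identity $h(\vm)=h(\xm^{t+1})+\langle\partial h(\xm^{t+1}),\vm-\xm^{t+1}\rangle$ for any $\vm$ of the same support, which presumes the same unproven orthogonality; so you have reproduced the paper's argument including its weakest link, but a blind proof still needs to justify or explicitly assume this step.)

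Two smaller points. First, your claim that the hard-thresholding operator is continuous at $\xm^*-s\nabla g(\xm^*)$ because the surviving singular values are ``bounded away from the threshold'' needs an argument: support stabilization gives $\sigma_i(\bar{\xm}^{t+1})>\sqrt{2\lambda s}$ for $i\in S^*$, but not a uniform gap in the limit; the paper instead verifies criticality by exhibiting a subgradient $\nabla g(\xm^{t_j})-\nabla g(\xm^{t_j-1})-\frac{1}{s}(\xm^{t_j}-\xm^{t_j-1})\in\partial F(\xm^{t_j})$ that tends to $0$, which avoids the continuity issue. Second, your Fej\'er-monotonicity route to full-sequence convergence is a genuine improvement over the paper (which essentially only establishes $\|\xm^{t+1}-\xm^t\|_F\to 0$), but it inherits the same dependency on the unestablished cross-term inequality.
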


 \begin{multicols}{2}
	\begin{figure}[H]
		\begin{center}
			\centerline{\includegraphics[width=.97\columnwidth]{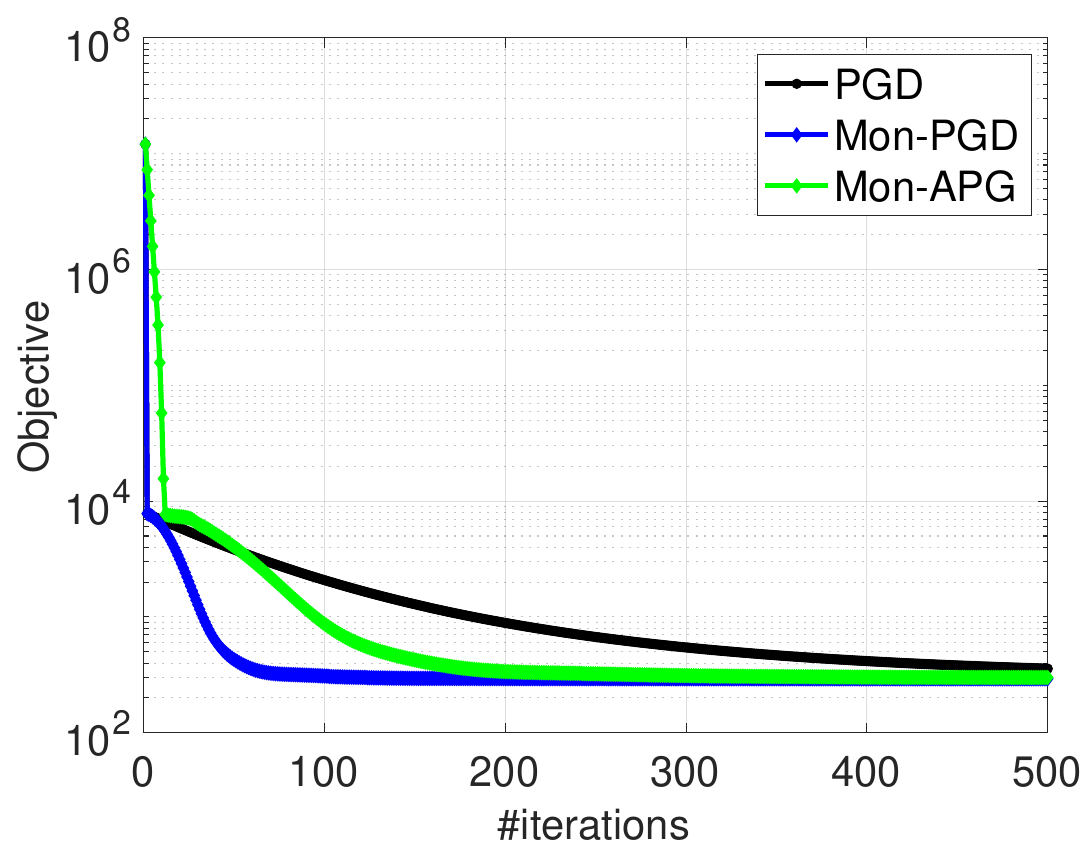}}
			\caption{The convergence for Algorithm~\ref{alg:1} PGD, Algorithm~\ref{alg:3} Mon-PGD, and Mon-APG.}
			\label{fig:1}
		\end{center}
	\end{figure}
	\begin{algorithm}[H]
		\caption{Non-monotone Accelerated Proximal Gradient Descent}
		\label{alg:2}
		\begin{algorithmic}
			\STATE {\bfseries Input:} Initialize $\xm^0$,
			$\xm^1 = \xm^0$, $\alpha^0 = 1, s, \lambda$.
			\FOR{$t=1, \dots $} 
			\STATE Update $\um^{t}, \vm^{t}, \xm^{t+1}, \alpha^{t+1}$ by Eq.~(\ref{eq:non1}).
			\ENDFOR
		\end{algorithmic}
	\end{algorithm}
	\begin{algorithm}[H]
		\caption{Monotone Accelerated Proximal Gradient Descent}
		\label{alg:3}
		\begin{algorithmic}
			\STATE {\bfseries Input:} Initialization 
			$\zm^1 = \xm^1 = \xm^0$, $\alpha^0, s, \lambda$.
			\FOR{$t=1, \dots $} 
			\STATE Update $\um^{t}, \vm^t, \zm^{t+1}, \alpha^{t+1}, \xm^{t+1}$ by (\ref{eq:mo1}).
			\ENDFOR
		\end{algorithmic}
	\end{algorithm}
\end{multicols}
\subsection{Non-monotone Accelerated Proximal Gradient Descent with Support Projection}\label{subsec:acc}
\begin{figure}[h!]
	\centering 
	\subfigure[Initialized $\xm^0$]{\label{fig:11}\includegraphics[width=42mm]{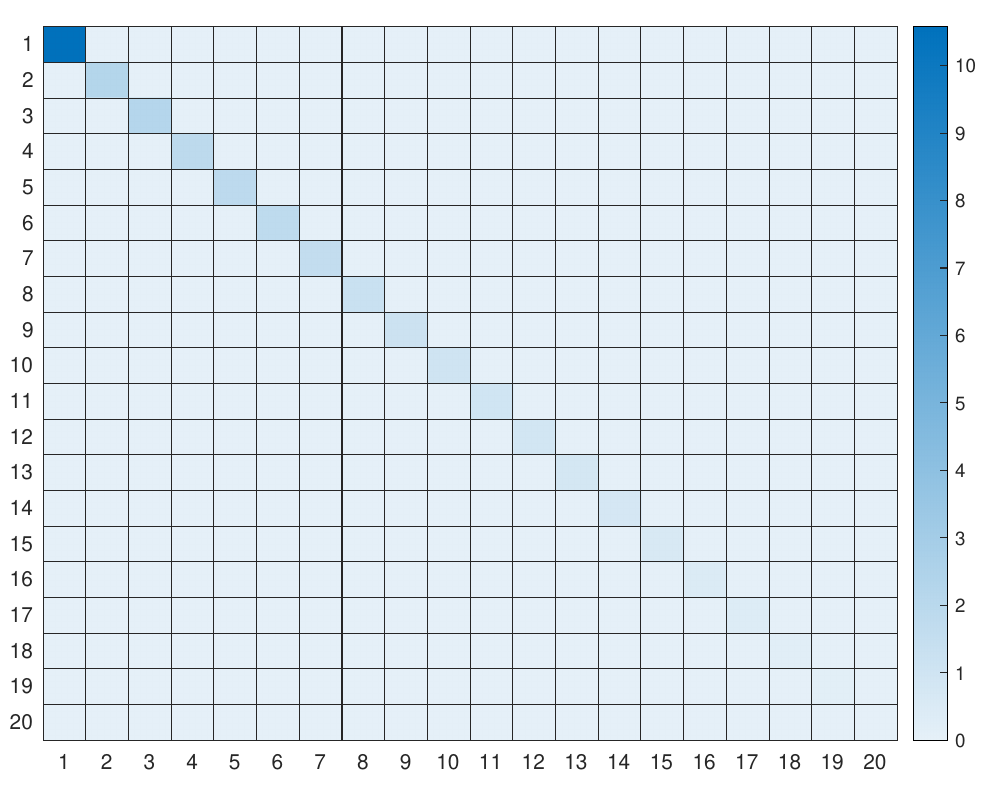}}
	\subfigure[$\xm^t$ during update]{\label{fig:12}\includegraphics[width=42mm]{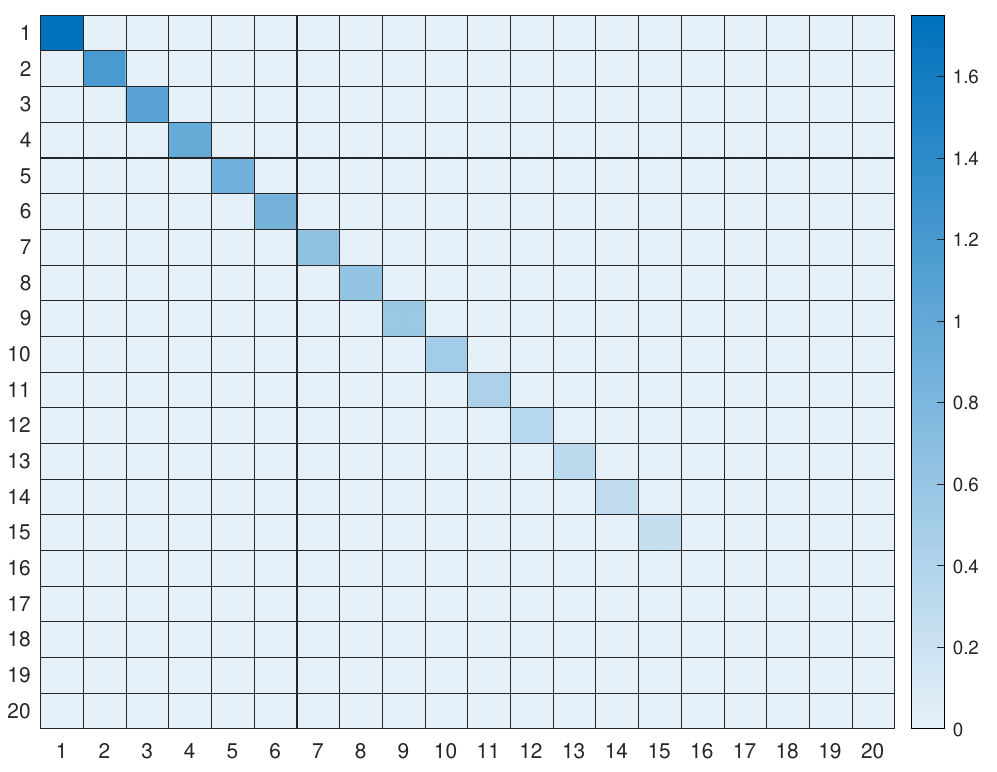}}
	\subfigure[Output $\xm^T$]
	{\label{fig:13}\includegraphics[width=42mm]{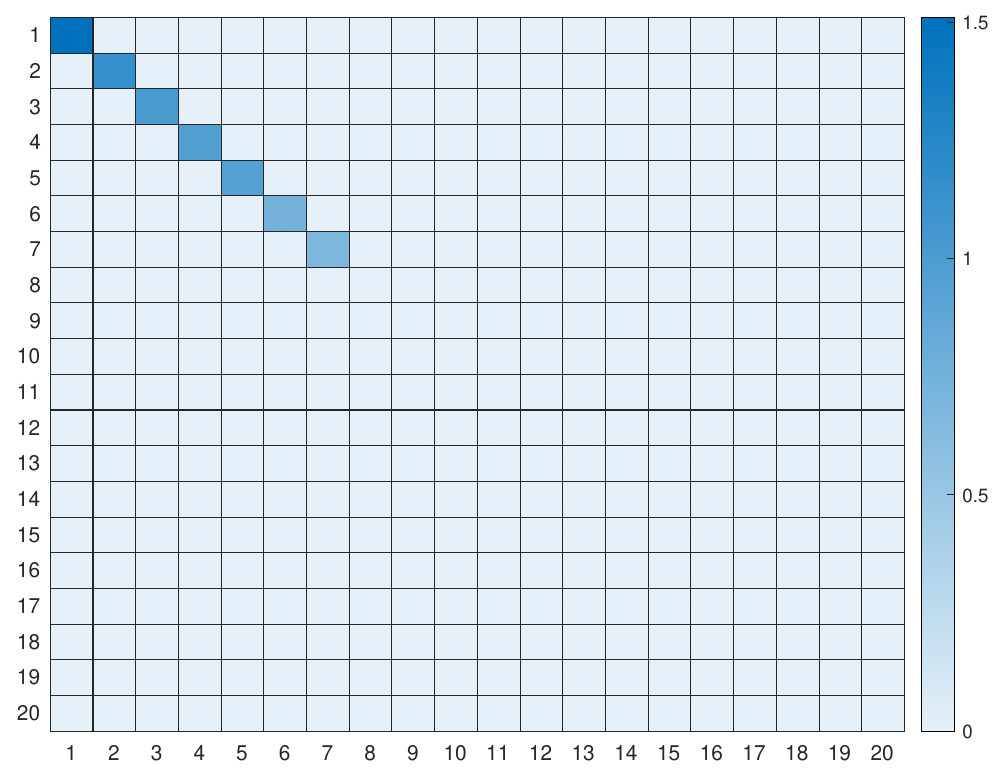}}
	\caption{Support Shrinkage of \textit{singular values} in Algorithm~\ref{alg:1} PGD.}
	\label{fig:2}
\end{figure}
In the non-monotone accelerated proximal gradient descent with support projection, the update process in the $t$-th iteration is as follows:
\begin{equation}\begin{split}\label{eq:non1}
    \um^t = \xm^t + \frac{\alpha^{t-1}-1}{\alpha^t}(\xm^t - \xm^{t-1}),
     \vm^t = P_{\sp(\sigma(\xm^t))} (\um^t), \\
    \xm^{t+1} = \prox_h (\vm^t - s\nabla g(\vm^t)) , 
     \alpha^{t+1} = \frac{\sqrt{1+4(\alpha^{t}) ^2}+1}{2},
\end{split}\end{equation}
where $P_{\sp(\sigma(\xm^t))}(\cdot)$ is the support projection operator which projects the singular value vector of a matrix to the support of the singular value vector of $\xm^t$ as $P_{\sp(\sigma(\xm^t))}(\tm) = \am \Sigma_{\textrm{projected}} \bm^T,$
with $\tm = \am \Sigma \bm^T$ being the singular value decomposition, and 
$\Sigma_{\textrm{projected}} (i,i) = \Sigma(i,i) $ if $i$ is in the support of $\sigma(\xm^t)$, otherwise $=0$. Here the support projection is employed to enforce the support shrinkage property mentioned in Lemma~\ref{lem:3}. The algorithm for the non-monotone accelerated proximal gradient descent with support projection is summarized in Algorithm~\ref{alg:2}. Lemma~\ref{lem:4} shows the support shrinkage property for Algorithm~\ref{alg:2}, and Theorem~\ref{thm:3} presents the convergence rate. 
\begin{lemma}\label{lem:4}
    The sequence $\{\xm^t\}_{t=1}^{\infty}$ generated by Algorithm~\ref{alg:2} satisfies $\sp(\sigma(\xm^{t+1})) \subseteq \sp(\sigma(\xm^{t})), t \geq 1$.
\end{lemma}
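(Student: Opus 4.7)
The plan is to factor the desired inclusion $\sp(\sigma(\xm^{t+1})) \subseteq \sp(\sigma(\xm^{t}))$ through the intermediate iterate $\vm^t$ by chaining the two inclusions $\sp(\sigma(\xm^{t+1})) \subseteq \sp(\sigma(\vm^{t})) \subseteq \sp(\sigma(\xm^{t}))$, under the same step-size assumption $s \leq \min\{\frac{2\lambda}{G^2}, \frac{1}{L}\}$ already used in Section~\ref{subsec:PGD}.

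The second inclusion is essentially definitional. Since $\vm^t = P_{\sp(\sigma(\xm^t))}(\um^t)$ is obtained from an SVD of $\um^t$ by zeroing every singular-value coordinate whose index lies outside $\sp(\sigma(\xm^t))$, the set of nonzero coordinates of $\sigma(\vm^t)$ is contained in $\sp(\sigma(\xm^t))$ by construction. This is in fact the whole purpose of inserting the support-projection step into Algorithm~\ref{alg:2}: the extrapolation $\um^t = \xm^t + \frac{\alpha^{t-1}-1}{\alpha^t}(\xm^t - \xm^{t-1})$ can easily have larger rank than $\xm^t$, so without this projection a naive repeat of the Lemma~\ref{lem:1} argument would fail.

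For the first inclusion, the plan is to replay the reasoning behind Lemma~\ref{lem:1} with $\vm^t$ now playing the role of $\xm^t$. Fix any index $i \notin \sp(\sigma(\vm^t))$, so that $\sigma_i(\vm^t) = 0$. Weyl's singular-value perturbation inequality together with the bound $\|\nabla g(\vm^t)\|_{op} = \sigma_{\max}(\nabla g(\vm^t)) \leq G$ gives $\sigma_i(\vm^t - s\nabla g(\vm^t)) \leq \sigma_i(\vm^t) + s\|\nabla g(\vm^t)\|_{op} \leq sG$. The step-size condition $s \leq \frac{2\lambda}{G^2}$ is equivalent to $sG \leq \sqrt{2\lambda s}$, so the singular-value hard-thresholding operator $T_{\sqrt{2\lambda s}}$ inside $\prox_h(\vm^t - s\nabla g(\vm^t))$ eliminates this coordinate, and $\sp(\sigma(\xm^{t+1})) \subseteq \sp(\sigma(\vm^t))$ follows.

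The main (and essentially only) subtlety is the application of Weyl's inequality to control the perturbation from the gradient step; once that bound is in hand, the argument is a verbatim rerun of Lemma~\ref{lem:1} one iteration at a time. Chaining the two inclusions completes the proof, and the result extends to all $t \geq 1$ by induction, with the base case $t=1$ being immediate because $\xm^1 = \xm^0$ forces $\um^1 = \xm^1$ and the general argument then applies.
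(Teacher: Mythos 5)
Your proposal is correct and takes essentially the same route as the paper's proof: the paper likewise chains $\sp(\sigma(\xm^{t+1})) \subseteq \sp(\sigma(\vm^{t})) \subseteq \sp(\sigma(\xm^{t}))$, obtaining the second inclusion directly from the definition of the support projection and the first by rerunning the Weyl-inequality/hard-thresholding argument of Lemma~\ref{lem:1} with $\vm^t$ in place of $\xm^t$, all phrased as an induction on $t$ whose base case is handled exactly as you do. (As in your version, the inductive hypothesis is never actually invoked in the paper's inductive step, since both inclusions hold at each iteration independently.)
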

\begin{theorem}\label{thm:3}
      Suppose $s \leq \min \{\frac{2\lambda}{G^2}, \frac{1}{L}\}$. and $\xm^*$ is a limit point of $\{\xm^{t}\}_{t=0}^{\infty}$ generated by Algorithm~\ref{alg:2}, 
  then there exists $t_0 \geq 1$ such that for all $m \geq t_0$, we have
    \begin{equation}
        F(\xm^{m+1}) - F(\xm^*) \leq \frac{4}{(m+1)^2}[\frac{1}{2s} \|(\alpha^{t_0-1}-1)\xm^{t_0-1} - \alpha^{t_0-1}\xm^{t_0} +\xm^*\|_F^2 + (\alpha^{t_0-1})^2(F(\xm^{t_0})-F(\xm^*))].
    \end{equation}
\end{theorem}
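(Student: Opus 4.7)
The plan is to show that once the singular-value support stabilizes, the algorithm behaves effectively as a FISTA iteration on a locally convex reduction of the problem, so that Beck--Teboulle's telescoping yields the $\mathcal{O}(1/m^2)$ rate. By Lemma~\ref{lem:4}, $\sp(\sigma(\xm^t))$ is monotonically non-increasing; since it lives in a finite family, there exists $t_0 \geq 1$ after which $\sp(\sigma(\xm^t)) = S^*$ is constant, with $k^* := |S^*|$. Thus for $t \geq t_0$ we have $\rk(\xm^t) = k^*$, so $h(\xm^t) = \lambda k^*$ is a fixed constant, and $\vm^t = P_{S^*}(\um^t)$ is the best rank-$k^*$ approximation of $\um^t$. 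Lower semi-continuity of rank gives $\rk(\xm^*) \leq k^*$, so $h(\xm^*) \leq \lambda k^*$ as well.

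The crux is the FISTA three-point inequality
\begin{equation*}
F(\xm^{t+1}) - F(\ym) \leq \tfrac{1}{2s}\bigl(\|\vm^t - \ym\|_F^2 - \|\xm^{t+1} - \ym\|_F^2\bigr),
\end{equation*}
for $t \geq t_0$ and for $\ym \in \{\xm^t, \xm^*\}$. The convex--smooth part is standard: $L$-smoothness with $s \leq 1/L$ combined with convexity of $g$ bounds $g(\xm^{t+1}) - g(\ym)$ as in the proof of Lemma~\ref{lem:2}. For the $h$ part the usual subgradient inequality is unavailable since $h$ is non-convex; instead I would combine the prox optimality of $\xm^{t+1}$ with the normal-cone condition on the rank-$k^*$ manifold (where $h$ is locally constant) and the fact that both $\ym$ and $\xm^{t+1}$ lie in the rank-$\leq k^*$ set to supply the missing piece.

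Given the three-point inequality, standard Beck--Teboulle bookkeeping takes over. Applying it with $\ym = \xm^*$ and with $\ym = \xm^t$, weighting by $1$ and $\alpha^t - 1$, multiplying through by $\alpha^t$, and using both the update identity $\alpha^t(\alpha^t - 1) = (\alpha^{t-1})^2$ and the momentum identity $\alpha^t \um^t - (\alpha^t - 1)\xm^t = \alpha^{t-1}\xm^t - (\alpha^{t-1} - 1)\xm^{t-1}$ yields the one-step recursion
\begin{equation*}
(\alpha^t)^2 u_{t+1} + \tfrac{1}{2s}\|\alpha^t \xm^{t+1} - (\alpha^t - 1)\xm^t - \xm^*\|_F^2 \leq (\alpha^{t-1})^2 u_t + \tfrac{1}{2s}\|\alpha^{t-1} \xm^t - (\alpha^{t-1} - 1)\xm^{t-1} - \xm^*\|_F^2,
\end{equation*}
where $u_t := F(\xm^t) - F(\xm^*)$. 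Telescoping from $t = t_0$ to $t = m$ and invoking the standard induction bound $\alpha^m \geq (m+1)/2$ yields exactly the claimed bound.

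The main obstacle will be the three-point inequality in the non-convex regime. The standard FISTA argument relies on a convex subgradient inequality for $h$, which is unavailable for $h = \lambda\,\rk$. The replacement must use the support stabilization (pinning $h$ to $\lambda k^*$ on rank-exactly-$k^*$ iterates), the normal-cone characterization of the SVD hard-thresholding prox, and the role of the support projection $P_{S^*}$ in keeping the momentum iterate within the rank-$\leq k^*$ set; a further subtle point is reconciling the classical telescoping identity, which is naturally stated in terms of $\um^t$, with the fact that the algorithm's gradient is actually evaluated at $\vm^t = P_{S^*}(\um^t)$.
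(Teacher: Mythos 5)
Your proposal follows essentially the same route as the paper's proof: support stabilization via Lemma~\ref{lem:4} to fix $S^*$ after some $t_0$, a three-point descent inequality valid for comparison points sharing that support (applied with $\wm=\xm^t$ and $\wm=\xm^*$), and the standard FISTA weighting, the identity $(\alpha^t)^2-\alpha^t=(\alpha^{t-1})^2$, telescoping, and $\alpha^m\geq\frac{m+1}{2}$. The two obstacles you flag are resolved in the paper exactly as you anticipate: the $h$-part of the three-point inequality follows from the prox optimality condition, whose residual (the singular directions outside $\sp(\sigma(\xm^{t+1}))$) makes $h$ behave affinely on matrices with the stabilized support, and the $\um^t$-versus-$\vm^t$ discrepancy is absorbed by the non-expansiveness bound $\|\am - P_{\sp(\sigma(\cm))}(\bm)\|_F \leq \|\am-\bm\|_F$ whenever $\sp(\sigma(\am)) \subseteq \sp(\sigma(\cm))$.
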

\subsection{Monotone Accelerated Proximal Gradient Descent with Support Projection}\label{subsec:mon}
To ensure the objective is non-increasing, we introduce the following  algorithm which is summarized in Algorithm~\ref{alg:3}~\cite{beck2017first}:
\begin{equation}\begin{split}\label{eq:mo1}
    \um^t = \xm^t + \frac{\alpha^{t-1}-1}{\alpha^t}(\xm^t - \xm^{t-1}) 
    + \frac{\alpha^t-1}{\alpha^t}(\zm^t - \xm^t),
     \vm^t &= P_{\sp(\sigma(\zm^t))} (\um^t), \\
    \zm^{t+1} = \prox_h (\vm^t - s\nabla g(\vm^t)) , 
     \alpha^{t+1} = \frac{\sqrt{1+4(\alpha^{t})^ 2}+1}{2},
     \xm^{t+1} &= \begin{cases}
         \zm^{t+1} & \text{if } F(\zm^{t+1}) \leq F(\xm^t), \\
         \xm^t & \text{otherwise.} \\
     \end{cases}
\end{split}\end{equation}
Lemma~\ref{lem:5} shows the support shrinkage property and Theorem~\ref{thm:4} presents the convergence rate. 
\begin{lemma}\label{lem:5}
    The sequence $\{\zm^t\}_{t=1}^{\infty}$ and $\{\xm^t\}_{t=1}^{\infty}$ generated by Algorithm~\ref{alg:3} satisfies
$\sp(\sigma(\zm^{t+1})) \subseteq \sp(\sigma(\zm^{t})), t \geq 1,
        \sp(\sigma(\xm^{t+1})) \subseteq \sp(\sigma(\xm^{t})), t \geq 1$.
\end{lemma}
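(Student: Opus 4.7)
The plan is to prove the two inclusions in sequence, with the first serving as an ingredient to the second. For the $\zm$-inclusion, the key observation is that the support projection $\vm^t = P_{\sp(\sigma(\zm^t))}(\um^t)$ forces the singular values of $\vm^t$ to vanish at every index outside $\sp(\sigma(\zm^t)) = \{1,\dots,\rk(\zm^t)\}$. I would then mimic the argument used in Lemma~\ref{lem:1}: apply Weyl's inequality to $\wm^t := \vm^t - s\nabla g(\vm^t)$ to obtain
\begin{equation*}
\sigma_{\rk(\zm^t)+1}(\wm^t) \leq \sigma_{\rk(\zm^t)+1}(\vm^t) + s\,\sigma_{max}(\nabla g(\vm^t)) \leq 0 + sG \leq \sqrt{2\lambda s},
\end{equation*}
where the last inequality uses the step-size bound $s \leq 2\lambda/G^2$. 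Since $\zm^{t+1}$ is obtained by applying the hard thresholding operator $T_{\sqrt{2\lambda s}}$ to $\wm^t$, every singular value in a position beyond $\rk(\zm^t)$ is zeroed out, which gives $\sp(\sigma(\zm^{t+1})) \subseteq \sp(\sigma(\zm^t))$.

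For the $\xm$-inclusion I would induct on $t$ together with the auxiliary invariant $\sp(\sigma(\zm^{t+1})) \subseteq \sp(\sigma(\xm^t))$ for all $t \geq 1$. The base case $t=1$ is immediate because $\zm^1 = \xm^1 = \xm^0$, so the first part gives $\sp(\sigma(\zm^2)) \subseteq \sp(\sigma(\zm^1)) = \sp(\sigma(\xm^1))$. For the inductive step I would split on the monotone selector at time $t$: if $\xm^t = \zm^t$, then $\sp(\sigma(\zm^{t+1})) \subseteq \sp(\sigma(\zm^t)) = \sp(\sigma(\xm^t))$; and if $\xm^t = \xm^{t-1}$, then chaining the first part with the inductive hypothesis gives $\sp(\sigma(\zm^{t+1})) \subseteq \sp(\sigma(\zm^t)) \subseteq \sp(\sigma(\xm^{t-1})) = \sp(\sigma(\xm^t))$. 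Because $\xm^{t+1}$ is either $\zm^{t+1}$ or $\xm^t$, the invariant instantly yields $\sp(\sigma(\xm^{t+1})) \subseteq \sp(\sigma(\xm^t))$.

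The main obstacle I anticipate is the bookkeeping induced by the entanglement of the two sequences: the support projection in Eq.~(\ref{eq:mo1}) is taken with respect to $\zm^t$ rather than $\xm^t$, and the monotone selector can reuse $\xm^{t-1}$, breaking a direct propagation of support from $\xm^{t-1}$ to $\xm^t$. The remedy is to carry the auxiliary invariant $\sp(\sigma(\zm^{t+1})) \subseteq \sp(\sigma(\xm^t))$ through the induction, which ties the newly computed $\zm^{t+1}$ back to the older of the two candidates whenever the selector rolls back. Aside from this bookkeeping, the analysis reduces to the Weyl-inequality-plus-hard-thresholding calculation already invoked for Algorithm~\ref{alg:1}.
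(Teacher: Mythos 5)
Your proposal is correct and follows essentially the same route as the paper: the Weyl-plus-hard-thresholding argument for the $\zm$-inclusion, and a case split on the monotone selector for the $\xm$-inclusion. Your explicit auxiliary invariant $\sp(\sigma(\zm^{t+1})) \subseteq \sp(\sigma(\xm^t))$ is in fact a cleaner justification of the step the paper only asserts (that $\sp(\sigma(\xm^{t})) = \sp(\sigma(\zm^{\bar{t}}))$ for some $\bar{t} \leq t$).
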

\begin{theorem}\label{thm:4}
      Suppose $s \leq \min \{\frac{2\lambda}{G^2}, \frac{1}{L}\}$. and $\xm^*$ is a limit point of $\{\xm^{t}\}_{t=0}^{\infty}$ generated by Algorithm~\ref{alg:3}, 
  then there exists $t_0 \geq 1$ such that for all $m \geq t_0$, we have
    \begin{equation}
        F(\xm^{m+1}) - F(\xm^*) \leq \frac{4}{(m+1)^2}[\frac{1}{2s} \|(\alpha^{t_0-1}-1)\xm^{t_0-1} - \alpha^{t_0-1}\zm^{t_0} +\xm^*\|_F^2  + (\alpha^{t_0-1})^2(F(\xm^{t_0})-F(\xm^*))].
    \end{equation}
\end{theorem}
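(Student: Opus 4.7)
The plan is to combine Lemma~\ref{lem:5} with a Nesterov-style potential-function argument in the vein of the monotone FISTA analysis of Beck--Teboulle, exploiting the fact that after the singular-value support stabilizes, the rank regularizer $h$ is constant on all relevant iterates and the problem locally behaves like a smooth convex one.

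\textbf{Step 1 (Support stabilization).} By Lemma~\ref{lem:5}, $\{\sp(\sigma(\zm^t))\}_{t\ge1}$ and $\{\sp(\sigma(\xm^t))\}_{t\ge1}$ are nested subsets of the finite set $\sp(\sigma(\xm^0))$, so both stabilize at a common support $S^*$ after some iteration $t_0\ge1$. For $t\ge t_0$, $h(\zm^t)=h(\xm^t)=\lambda|S^*|$ is constant, and a continuity argument analogous to Theorem~\ref{thm:2} forces $\sp(\sigma(\xm^*))=S^*$ as well, so $h$ is effectively a fixed constant on the limit point and on every iterate that enters the analysis.

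\textbf{Step 2 (Descent inequality and Nesterov contraction).} For $t\ge t_0$, combining the $L$-Lipschitz quadratic upper bound on $g$ (using $s\le 1/L$), the proximal optimality of $\zm^{t+1}$ viewed as the strongly convex minimizer of the surrogate quadratic restricted to matrices with support in $S^*$, and the convexity of $g$, one derives for every $\xm$ with $\sp(\sigma(\xm))\subseteq S^*$ the key inequality
\[F(\zm^{t+1})-F(\xm)\le \tfrac{1}{2s}\bigl(\|\xm-\vm^t\|_F^2-\|\xm-\zm^{t+1}\|_F^2\bigr).\]
Applying it with $\xm=\xm^*$ and $\xm=\xm^t$ (both have support in $S^*$ for $t\ge t_0$), taking the convex combination with weights $1$ and $\alpha^t-1$, multiplying by $\alpha^t$, and invoking the recurrence identity $(\alpha^t)^2-\alpha^t=(\alpha^{t-1})^2$ produces, after setting $\wm^t:=\alpha^{t-1}\zm^t-(\alpha^{t-1}-1)\xm^{t-1}$, the single-step contraction
\[2s(\alpha^t)^2\bigl(F(\zm^{t+1})-F(\xm^*)\bigr)+\|\wm^{t+1}-\xm^*\|_F^2 \le 2s(\alpha^{t-1})^2\bigl(F(\xm^t)-F(\xm^*)\bigr)+\|\wm^t-\xm^*\|_F^2.\]

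\textbf{Step 3 (Monotone guard and telescoping).} The monotone update gives $F(\xm^{t+1})=\min(F(\zm^{t+1}),F(\xm^t))\le F(\zm^{t+1})$. Telescoping the contraction from $t_0$ to $m$, dropping the nonnegative $\|\wm^{m+1}-\xm^*\|_F^2$, and using the standard consequence $(\alpha^m)^2\ge (m+1)^2/4$ of the recurrence $\alpha^{t+1}=(1+\sqrt{1+4(\alpha^t)^2})/2$ yields the stated bound, noting that $\|\wm^{t_0}-\xm^*\|_F^2=\|(\alpha^{t_0-1}-1)\xm^{t_0-1}-\alpha^{t_0-1}\zm^{t_0}+\xm^*\|_F^2$ matches the initial data appearing on the right-hand side of the theorem.

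\textbf{Main obstacle.} The delicate point is Step~2: the momentum $\um^t$ is a linear combination of rank-$|S^*|$ matrices whose sum generically has rank strictly greater than $|S^*|$, which is precisely why the support projection $\vm^t=P_{\sp(\sigma(\zm^t))}(\um^t)$ is applied. Because this SVD truncation is \emph{not} orthogonal projection onto a convex set, one cannot directly invoke the convex FISTA calculus, and justifying the substitution of $\vm^t$ for $\um^t$ in the Nesterov algebraic identity $\alpha^t\um^t=\alpha^{t-1}\xm^t-(\alpha^{t-1}-1)\xm^{t-1}+(\alpha^t-1)\zm^t$ requires exploiting that both $\xm^*$ and $\xm^t$ live in the set of matrices with support in $S^*$, so that the discarded singular components of $\um^t$ are invisible to the inner products driving the potential decrease. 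Rigorously reconciling this non-convex rank manifold with the convex FISTA machinery is the principal technical hurdle, and explains why the $O(1/m^2)$ rate only kicks in after $t_0$.
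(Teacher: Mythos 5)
Your skeleton coincides with the paper's: stabilize the supports via Lemma~\ref{lem:5} and the shrinking-support machinery of Lemma~\ref{lem:3}, derive the three-point sufficient-decrease inequality at $\wm=\xm^t$ and $\wm=\xm^*$ (both supported on $S^*$ after $t_0$), combine with weights $\alpha^t-1$ and $1$, multiply by $\alpha^t$, use $(\alpha^t)^2-\alpha^t=(\alpha^{t-1})^2$, invoke $F(\xm^{t+1})\le F(\zm^{t+1})$ from the monotone guard, telescope, and finish with $\alpha^m\ge\frac{m+1}{2}$. The problem is that the one step you explicitly leave open --- justifying the replacement of $\um^t$ by $\vm^t=P_{\sp(\sigma(\zm^t))}(\um^t)$ inside the potential --- is precisely the step that carries all the difficulty, and you do not close it. The paper closes it with a single claimed inequality: if $\sp(\sigma(\am))\subseteq\sp(\sigma(\cm))$ then $\|\am-P_{\sp(\sigma(\cm))}(\bm)\|_F\le\|\am-\bm\|_F$, applied with $\am=\frac{(\alpha^t-1)\xm^t+\xm^*}{\alpha^t}$ and $\bm=\um^t$, which turns $\|(\alpha^t-1)\xm^t-\alpha^t\vm^t+\xm^*\|_F\le\|(\alpha^t-1)\xm^t-\alpha^t\um^t+\xm^*\|_F$ and lets the potential telescope. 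Your heuristic substitute --- that the discarded singular components of $\um^t$ are ``invisible to the inner products'' --- is not correct as stated: the truncation acts in $\um^t$'s own singular basis, which need not be aligned with the singular bases of $\xm^t$, $\zm^t$, or $\xm^*$, so the discarded part is in general not orthogonal to $\xm^t-\vm^t$ or $\xm^*-\vm^t$. Note moreover that the quasi-nonexpansiveness inequality above is \emph{false} for arbitrary $\am$ of admissible rank (take $\bm=\dg(2,1)$, $\rk(\cm)=1$, $\am=\dg(0,10)$: then $\|\am-P(\bm)\|_F^2=104>85=\|\am-\bm\|_F^2$), so any rigorous proof must exploit the specific structure of $\am$ here; simply asserting the step, as you do, is a genuine gap.

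A secondary issue: the identity you quote, $\alpha^t\um^t=\alpha^{t-1}\xm^t-(\alpha^{t-1}-1)\xm^{t-1}+(\alpha^t-1)\zm^t$, does follow from Eq.~(\ref{eq:mo1}), but it reduces $(\alpha^t-1)\xm^t-\alpha^t\um^t+\xm^*$ to $(\alpha^{t-1}-1)\xm^{t-1}-\alpha^{t-1}\zm^t+\xm^*$ only up to the extra term $(\alpha^t-1-\alpha^{t-1})(\xm^t-\zm^t)$, which vanishes only when $\xm^t=\zm^t$ or when the momentum coefficient on $\zm^t-\xm^t$ is $\alpha^{t-1}/\alpha^t$ (the Beck--Teboulle monotone-FISTA choice) rather than $(\alpha^t-1)/\alpha^t$. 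So your single-step contraction in Step~2 does not telescope exactly with the coefficients as written; you need the corrected momentum weight for $\|\wm^t-\xm^*\|_F^2$ to be the right potential.
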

We leave all the detailed proof in the Appendix due to space limit.

\bibliography{example_paper}
\bibliographystyle{icml2023}

\newpage
\appendix
\onecolumn
\section{Proofs for Subsection ~\ref{subsec:PGD}}

\begin{lemma}\label{lem:1appen}
If $s \leq \min \{\frac{2\lambda}{G^2}, \frac{1}{L}\}$, then 
\begin{equation}
    \sp (\sigma(\xm^{t+1})) \subseteq \sp (\sigma(\xm^{t})), \textrm{for } t \geq 0,
\end{equation}
and
\begin{equation}
   \rk(\xm^{t+1}) \leq  \rk(\xm^{t}), \textrm{for } t \geq 0,
\end{equation}
which means the support of the singular value vectors of the sequence $\{\xm^t\}_t$ shrinks, also the rank of the sequence $\{\xm^t\}_t$ decreases.
\end{lemma}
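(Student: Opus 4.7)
The plan is to reduce both claims to a single rank inequality, then apply Weyl's inequality for singular values together with the step size bound $s \leq \frac{2\lambda}{G^2}$. Because the paper's convention orders singular values in decreasing magnitude, $\sigma(\xm) = (\sigma_{max}(\xm), \sigma_2(\xm), \ldots, \sigma_{min}(\xm))$, the support of the vector $\sigma(\xm)$ is exactly $\{1, 2, \ldots, \rk(\xm)\}$. Consequently the containment $\sp(\sigma(\xm^{t+1})) \subseteq \sp(\sigma(\xm^t))$ is equivalent to $\rk(\xm^{t+1}) \leq \rk(\xm^t)$, so it suffices to prove the rank inequality, and the two conclusions collapse into one.

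Let $r := \rk(\xm^t)$ and set $\qm := \xm^t - s\nabla g(\xm^t)$. By Eq.~(\ref{eq:proximal}), $\xm^{t+1} = T_{\sqrt{2\lambda s}}(\qm)$, so $\rk(\xm^{t+1})$ equals the number of singular values of $\qm$ that strictly exceed $\sqrt{2\lambda s}$. I would then invoke Weyl's inequality $\sigma_i(A+B) \leq \sigma_i(A) + \sigma_1(B)$ with $A = \xm^t$, $B = -s\nabla g(\xm^t)$, and $i = r+1$. Since $\sigma_{r+1}(\xm^t) = 0$ (because $\rk(\xm^t) = r$) and $\sigma_{max}(\nabla g(\xm^t)) \leq G$ by the paper's Assumption 1, this yields $\sigma_{r+1}(\qm) \leq sG$. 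Monotonicity of singular values in the index extends this to $\sigma_i(\qm) \leq sG$ for every $i \geq r+1$.

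The step size condition $s \leq \frac{2\lambda}{G^2}$ (with $s > 0$) rearranges to $sG \leq \sqrt{2\lambda s}$, which is exactly the threshold level appearing in $T_{\sqrt{2\lambda s}}$. Hence $\sigma_i(\qm) \leq \sqrt{2\lambda s}$ for all $i \geq r+1$, and by the paper's definition of $\Sigma_\theta$ (which zeros out any entry with magnitude $\leq \theta$) these singular values are set to zero. Therefore $\rk(\xm^{t+1}) \leq r$, which gives both assertions of the lemma.

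The only nontrivial step is recognizing that Weyl's inequality applied at index $r+1$ is the right tool, and that the threshold bound $s \leq \frac{2\lambda}{G^2}$ is precisely calibrated to turn the perturbation bound $sG$ into something absorbed by $\sqrt{2\lambda s}$; the rest is arithmetic. Worth noting: the companion constraint $s \leq 1/L$ is not used in this lemma at all — it is needed for the sufficient decrease property in Lemma~\ref{lem:2}.
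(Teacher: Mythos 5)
Your proposal is correct and follows essentially the same route as the paper: both apply Weyl's inequality $\sigma_i(\am+\bm)\leq\sigma_i(\am)+\sigma_1(\bm)$ at the indices where $\sigma_i(\xm^t)=0$, bound the perturbation by $sG$ via the assumption $\sigma_{max}(\nabla g)\leq G$, and use $s\leq\frac{2\lambda}{G^2}\iff sG\leq\sqrt{2\lambda s}$ to conclude those singular values are annihilated by the hard-thresholding operator. Your added observations --- that under the decreasing-order convention the support statement and the rank statement coincide, and that the constraint $s\leq\frac{1}{L}$ is not actually needed here --- are both accurate but do not change the substance of the argument.
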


\begin{proof}
Let $\bar{\xm}^{t+1} = \xm^t - s\nabla g(\xm^t), \qm^t = -s\nabla g(\xm^t),$ thus we have $\bar{\xm}^{t+1} = \xm^t + \qm^t$. With Weyl's inequality 
\begin{equation}
    \sigma_{i+j-1} (\am + \bm) \leq \sigma_i(\am) + \sigma_j(\bm),
\end{equation}
we get 
\begin{equation}
    \sigma_i(\bar{\xm}^{t+1}) \leq \sigma_i(\xm^t) + \sigma_1(\qm^t) = \sigma_1(\qm^t) \leq sG,
    \textrm{for all } i \textrm{ where } \sigma_i(\xm^t) = 0.  
\end{equation}

With $s \leq \min \{\frac{2\lambda}{G^2}, \frac{1}{L}\}$, we have $\sigma_i(\bar{\xm}^{t+1})  \leq \sqrt{2\lambda s}$, therefore $\sigma_i(\xm^{t+1})  = 0$. So the zero elements of $\sigma(\xm^t)$ remain unchanged in $\sigma(\xm^{t+1})$, and  $\sp (\sigma(\xm^{t+1})) \subseteq \sp (\sigma(\xm^{t}))$, $\rk(\xm^{t+1}) \leq  \rk(\xm^{t})$. 
\end{proof}

\begin{lemma}\label{lem:2appen}
The sequence of the objective $\{F(\xm^t)\}_t$ is nonincreasing, and the following inequality holds for all $t \geq 0$:
\begin{equation}
    F(\xm^{t+1}) \leq F(\xm^t) - (\frac{1}{2s} - \frac{L}{2})\|\xm^{t+1} - \xm^t\|^2_F.
\end{equation}
\end{lemma}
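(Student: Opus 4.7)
The plan is to establish the standard sufficient-decrease inequality for proximal gradient descent, which does not require convexity of $h$ but only that $\xm^{t+1}$ achieves the minimum of the proximal subproblem. I would carry this out in three short steps: (i) a descent estimate on $g$ from $L$-smoothness, (ii) an optimality inequality for $\xm^{t+1}$ from the definition of the proximal mapping, and (iii) adding the two and rearranging.

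First, since $\nabla g$ is $L$-Lipschitz, the usual quadratic upper bound (descent lemma) yields
\begin{equation*}
    g(\xm^{t+1}) \;\leq\; g(\xm^{t}) + \langle \nabla g(\xm^{t}),\, \xm^{t+1}-\xm^{t}\rangle + \frac{L}{2}\|\xm^{t+1}-\xm^{t}\|_F^{2}.
\end{equation*}
Second, interpreting Eq.~(\ref{eq:proximal}) as $\xm^{t+1} \in \argmin_{\vm}\bigl\{ h(\vm) + \tfrac{1}{2s}\|\vm - (\xm^{t}-s\nabla g(\xm^{t}))\|_F^{2}\bigr\}$, I would substitute $\vm = \xm^{t}$ as a feasible competitor (this step uses no convexity of $h$, only that $\xm^{t+1}$ is a minimizer, which is guaranteed by the closed-form hard-thresholding expression in Eq.~(\ref{eq:proximal})). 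Expanding the squared Frobenius norm and cancelling the common $\tfrac{s}{2}\|\nabla g(\xm^{t})\|_F^{2}$ terms gives
\begin{equation*}
    h(\xm^{t+1}) + \frac{1}{2s}\|\xm^{t+1}-\xm^{t}\|_F^{2} + \langle \nabla g(\xm^{t}),\, \xm^{t+1}-\xm^{t}\rangle \;\leq\; h(\xm^{t}).
\end{equation*}

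Third, adding these two inequalities the inner-product terms cancel exactly, and I obtain
\begin{equation*}
    F(\xm^{t+1}) \;\leq\; F(\xm^{t}) - \Bigl(\frac{1}{2s} - \frac{L}{2}\Bigr)\|\xm^{t+1}-\xm^{t}\|_F^{2},
\end{equation*}
which is the desired bound. Non-increase of $\{F(\xm^{t})\}_t$ then follows because the hypothesis $s \leq \min\{2\lambda/G^{2},1/L\}$ in particular implies $s \leq 1/L$, so the coefficient $\tfrac{1}{2s}-\tfrac{L}{2}$ is non-negative.

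I do not expect any real obstacle here: the only point that deserves a line of comment in the final write-up is that the optimality inequality holds even though $h(\cdot)=\lambda\cdot\rk(\cdot)$ is non-convex, since we exploit only the global-minimizer property of $\xm^{t+1}$ in the proximal subproblem (which is ensured by the explicit hard-thresholding formula $T_{\sqrt{2\lambda s}}$). Everything else reduces to the $L$-smoothness bound and an algebraic cancellation.
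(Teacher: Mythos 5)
Your proposal is correct and follows essentially the same route as the paper's proof: the descent lemma for $g$, the optimality inequality obtained by substituting $\vm=\xm^{t}$ into the proximal subproblem, and the cancellation of the inner-product terms upon adding the two. Your explicit remark that only the global-minimizer property of $\xm^{t+1}$ (not convexity of $h$) is used is a point the paper leaves implicit, but the argument is identical.
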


\begin{proof}
    Let $\bar{\xm}^{t+1} = \xm^t - s\nabla g(\xm^t), \qm^t = -s\nabla g(\xm^t),$ thus we have $\bar{\xm}^{t+1} = \xm^t + \qm^t$,
    and 
    \begin{equation}
        \xm^{t+1} = \argmin_{\vm} \frac{1}{2s} \|\vm - \bar{\xm}^{t+1} \|^2_F + h(\vm).
    \end{equation}
    Let $\vm = \xm^t$, we get
    \begin{equation}\label{eq:ap1}
        \langle \nabla g(\xm^t), \xm^{t+1} - \xm^t \rangle + \frac{1}{2s} \|\xm^{t+1} - \xm^t\|^2_F + h(\xm^{t+1}) \leq h(\xm^t).
    \end{equation}
    In addition, we have
    \begin{equation}\label{eq:ap2}
        g(\xm^{t+1}) \leq g(\xm^t) + \langle \nabla g(\xm^t), \xm^{t+1} - \xm^t \rangle + \frac{L}{2} \|\xm^{t+1} - \xm^t\|^2_F.
    \end{equation}
Combine Eq.~(\ref{eq:ap1}) and Eq.~(\ref{eq:ap2}) together, we get
\begin{equation}
    F(\xm^{t+1}) \leq F(\xm^t) - (\frac{1}{2s} - \frac{L}{2}) \|\xm^{t+1} - \xm^t \|^2_F,
\end{equation}
    since $s \leq \frac{1}{L}$, we have $\frac{1}{2s} \geq \frac{L}{2}$, so the sequence $\{ F(\xm^t)\}_t$ is decreasing with lower bound 0. 
\end{proof}

\begin{lemma}~\cite{laurent2000adaptive}\label{lem:aappen}
    Let $Y_1, Y_2, \dots, Y_D$ be i.i.d. Gaussian random variables with 0 mean and unit variance, and $a_1, a_2, \dots, a_D$ be D positive numbers. Define $Z = \sum a_i(Y_i^2 - 1)$ and $\av = [a_1, a_2, \dots, a_D]^T$, then for any $t > 0$, 
    \begin{equation}
        \textrm{Probability} (Z \geq 2\|\av\|_2 \sqrt{t} + 2\|\av\|_{\infty} t) \leq e^{-t}.
    \end{equation}
\end{lemma}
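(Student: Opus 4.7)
The plan is the classical Cram\'er--Chernoff approach: for any $\lambda \in (0, 1/(2\|\av\|_\infty))$, Markov's inequality applied to $e^{\lambda Z}$ gives
\[
\textrm{Probability}(Z \geq u) \leq \exp\!\bigl(\log \mathbb{E}[e^{\lambda Z}] - \lambda u\bigr),
\]
and I would then optimize over $\lambda$ at the threshold $u = 2\|\av\|_2\sqrt{t} + 2\|\av\|_\infty t$.

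First I would compute the log-moment generating function. Since each $Y_i^2$ is chi-squared with one degree of freedom (MGF $(1-2s)^{-1/2}$ on $s<1/2$), independence of the $Y_i$ yields
\[
\log \mathbb{E}[e^{\lambda Z}] = \sum_{i=1}^D \Bigl(-\lambda a_i - \tfrac{1}{2}\log(1-2\lambda a_i)\Bigr).
\]
To bound this, I would invoke the elementary inequality $-y - \tfrac{1}{2}\log(1-2y) \leq y^2/(1-2y)$ for $y \in [0, 1/2)$, a consequence of $-\log(1-u) = \sum_{k\geq 1} u^k/k \leq u + u^2/(2(1-u))$. Applying it term-by-term and using $1-2\lambda a_i \geq 1-2\lambda\|\av\|_\infty$ produces
\[
\log \mathbb{E}[e^{\lambda Z}] \leq \sum_{i=1}^D \frac{(\lambda a_i)^2}{1-2\lambda a_i} \leq \frac{\lambda^2 \|\av\|_2^2}{1-2\lambda \|\av\|_\infty}.
\]

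The remaining step is to optimize $\lambda$. Writing $\sigma := \|\av\|_2$ and $c := \|\av\|_\infty$, I propose the specific choice $\lambda = \sqrt{t}/(\sigma + 2c\sqrt{t})$, which is admissible since $1-2\lambda c = \sigma/(\sigma + 2c\sqrt{t}) > 0$. A short algebraic check yields
\[
\frac{\lambda^2 \sigma^2}{1-2\lambda c} - \lambda u = \frac{t\sigma - 2t(\sigma + c\sqrt{t})}{\sigma + 2c\sqrt{t}} = -t,
\]
so the Chernoff bound becomes exactly $e^{-t}$, proving the claim. The main obstacle is identifying this $\lambda$: there is no one-line heuristic for it, and one either pulls it out of the Legendre--Fenchel dual of the MGF upper bound at the prescribed $u$, or equivalently first derives a Bernstein-type inequality $\textrm{Probability}(Z \geq u) \leq \exp(-u^2/(4\|\av\|_2^2 + 4\|\av\|_\infty u))$ by unconstrained optimization and then inverts the resulting quadratic in $u$ to recover the cleaner Laurent--Massart form stated above.
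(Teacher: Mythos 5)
Your proof is correct. The paper does not actually prove this lemma --- it is imported verbatim from the cited reference \cite{laurent2000adaptive} --- and your Cram\'er--Chernoff derivation (exact $\chi^2_1$ log-MGF, the bound $-y-\tfrac12\log(1-2y)\le y^2/(1-2y)$, and the choice $\lambda=\sqrt{t}/(\|\av\|_2+2\|\av\|_\infty\sqrt{t})$ making the exponent exactly $-t$) is precisely the standard Laurent--Massart argument, with all the algebra checking out.
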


\begin{lemma}~\cite{davidson2001local}\label{lem:bappen}
    Suppose $\am \in \mathbb{R}^{m \times n} ( m \geq n)$ is a random matrix whose entries are i.i.d. sampled from the standard Gaussian distribution \textit{N}$(0,\frac{1}{m})$, then 
    \begin{equation}
        1-\sqrt{\frac{n}{m}} \leq E(\sigma_n (\am)) \leq E(\sigma_1(\am)) \leq 1+ \sqrt{\frac{n}{m}}.
    \end{equation}
    And for any $t>0$,
    \begin{equation}
         \textrm{Probability}(\sigma_n(\am) \leq 1-\sqrt{\frac{n}{m}} - t) < e^{-\frac{mt^2}{2}},
    \end{equation}
    \begin{equation}
        \textrm{Probability}(\sigma_1(\am) \geq 1+\sqrt{\frac{n}{m}} + t) < e^{-\frac{mt^2}{2}}.
    \end{equation}
\end{lemma}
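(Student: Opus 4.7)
The plan is to split the lemma into two independent pieces: (i) the expectation sandwich
$1-\sqrt{n/m}\leq E(\sigma_n(\am))\leq E(\sigma_1(\am))\leq 1+\sqrt{n/m}$, and (ii) the one-sided Gaussian tails. Once the means are pinned down by (i), the tails in (ii) follow by plugging the mean bounds into a concentration inequality.

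For step (i), I would start from the variational formulas (valid because $m\ge n$):
\begin{equation*}
\sigma_1(\am)=\sup_{\uv\in S^{m-1},\,\vv\in S^{n-1}} \uv^T\am\vv,\qquad \sigma_n(\am)=\inf_{\vv\in S^{n-1}}\sup_{\uv\in S^{m-1}} \uv^T\am\vv.
\end{equation*}
The bilinear form $X_{\uv,\vv}:=\uv^T\am\vv$ is a centered Gaussian process on $S^{m-1}\times S^{n-1}$ with covariance $\frac{1}{m}(\uv^T\uv')(\vv^T\vv')$. I would introduce the auxiliary Gaussian process $Y_{\uv,\vv}:=\tfrac{1}{\sqrt{m}}(\mathbf{g}^T\uv+\mathbf{h}^T\vv)$ with independent $\mathbf{g}\sim N(0,I_m)$ and $\mathbf{h}\sim N(0,I_n)$, and verify that the increment variances satisfy the Slepian/Gordon comparison hypothesis (equality on the diagonal, domination off-diagonal). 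Gordon's min-max theorem then gives $E[\sup X]\le E[\sup Y]$ and $E[\inf_{\vv}\sup_{\uv} X]\ge E[\inf_{\vv}\sup_{\uv} Y]$. Evaluating the auxiliary extremes yields $\sup Y=(\|\mathbf{g}\|+\|\mathbf{h}\|)/\sqrt{m}$ and $\inf_{\vv}\sup_{\uv} Y=(\|\mathbf{g}\|-\|\mathbf{h}\|)/\sqrt{m}$, and applying the chi moment estimates $E\|\mathbf{g}\|\le\sqrt{m}$, $E\|\mathbf{h}\|\le\sqrt{n}$ (Jensen) together with the sharper lower bound $E\|\mathbf{g}\|\ge \sqrt{m}-o(1)$ (or more cleanly, passing $\|\mathbf{g}\|$ through its own concentration before taking expectation) produces the two-sided bound $1-\sqrt{n/m}\le E(\sigma_n(\am))\le E(\sigma_1(\am))\le 1+\sqrt{n/m}$.

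For step (ii), I would use the fact that $\sigma_1$ and $\sigma_n$, as functions of the matrix entries, are $1$-Lipschitz with respect to the Frobenius norm (immediate from the variational formulas and the triangle inequality). Reparameterizing $a_{ij}=g_{ij}/\sqrt{m}$ with $g_{ij}$ i.i.d.\ $N(0,1)$, both $\sigma_1$ and $\sigma_n$ become $1/\sqrt{m}$-Lipschitz functions of a standard Gaussian vector in $\mathbb{R}^{mn}$. The Borell--TIS Gaussian concentration inequality then yields
\begin{equation*}
\mathrm{Probability}\!\left(\sigma_1(\am)\ge E(\sigma_1(\am))+t\right)\le e^{-mt^2/2},\quad \mathrm{Probability}\!\left(\sigma_n(\am)\le E(\sigma_n(\am))-t\right)\le e^{-mt^2/2}.
\end{equation*}
Combining these with the mean bounds from step (i) yields the stated one-sided tails.

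The main obstacle will be step (i): correctly checking Gordon's increment-variance condition (the inequality has to go the right way depending on whether we are bounding a sup or an inf-sup), and tightening the lower moment estimate for $E\|\mathbf{g}\|$ so that the constants line up as $1\pm\sqrt{n/m}$ rather than $1\pm\sqrt{n/m}+O(1/\sqrt{m})$. The cleanest workaround, which I would adopt if the direct chi-moment calculation proves awkward, is to prove a slightly weaker mean bound, apply concentration, and then absorb the $O(1/\sqrt{m})$ slack into $t$; the original Davidson--Szarek argument avoids this by using sharp asymptotics of $\Gamma((m+1)/2)/\Gamma(m/2)$.
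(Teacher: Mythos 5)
The paper does not prove this lemma at all --- it is imported verbatim from Davidson and Szarek via the citation, and the appendix simply restates it before using it in Theorem~\ref{thm:1appen}. So there is no internal proof to compare against; what you have written is, in outline, precisely the original Davidson--Szarek argument: a Slepian/Gordon comparison of the bilinear process $\uv^T\am\vv$ with the decoupled process $(\mathbf{g}^T\uv+\mathbf{h}^T\vv)/\sqrt{m}$ to control the expectations, followed by the observation that $\sigma_1$ and $\sigma_n$ are $1/\sqrt{m}$-Lipschitz functions of the underlying standard Gaussian vector, so Borell--TIS concentration around the (already bounded) means gives the $e^{-mt^2/2}$ tails. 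Both halves are sound, the increment-variance check you describe does go the right way (the gap is $\tfrac{2}{m}(1-\uv^T\uv')(1-\vv^T\vv')\ge 0$, vanishing when $\vv=\vv'$, which is exactly Gordon's hypothesis for the min-max direction), and you have correctly isolated the one delicate point.

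Where I would push back is on your proposed workaround for that delicate point. For the lower bound you need $E\|\mathbf{g}\|-E\|\mathbf{h}\|\ge\sqrt{m}-\sqrt{n}$, equivalently that the deficit $\sqrt{k}-\sqrt{2}\,\Gamma(\tfrac{k+1}{2})/\Gamma(\tfrac{k}{2})$ is nonincreasing in $k$; Jensen only supplies $E\|\mathbf{g}\|\le\sqrt{m}$ and $E\|\mathbf{h}\|\le\sqrt{n}$, which point the wrong way for the difference. Absorbing the resulting $O(1/\sqrt{m})$ slack into $t$ does not recover the lemma as stated: it proves $\textrm{Probability}(\sigma_n(\am)\le 1-\sqrt{n/m}-t-c/\sqrt{m})\le e^{-mt^2/2}$, which is strictly weaker than the displayed inequality. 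If you want a self-contained proof you must actually carry out the Gamma-ratio monotonicity step (or an equivalent sharp lower bound on $E\chi_k$); otherwise the honest move is the one the paper makes, namely to cite the result.
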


\begin{theorem}\label{thm:1appen}
Suppose $\dm \in \mathbb{R}^{d \times n} (n \geq d)$ is a random matrix with elements i.i.d. sampled from the standard Gaussian distribution \textit{N}(0,1), then 
\begin{equation}
    \textrm{Probability}(\frac{1}{L} \leq \frac{2\lambda}{G^2}) \geq 1-e^{-\frac{a^2}{2}} - ne^{-a},
\end{equation}
if 
\begin{equation}\label{eq:un}
    n \geq (\sqrt{d} + a + \sqrt{\frac{(d+2\sqrt{da}+2a)(x_0 + \lambda |S|))}{\lambda}})^2,
\end{equation}
where $x_0 = \|\ym - \dm \xm^0\|^2_F$, $S = \sp(\sigma(\xm^0))$, and $a$ can be chosen as $a_0 \textrm{log}n$ for $a_0 > 0$ to ensure that Eq.~(\ref{eq:un}) holds with high probability. 
\end{theorem}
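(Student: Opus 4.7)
The plan is to rewrite the desired inequality $\frac{1}{L}\leq\frac{2\lambda}{G^{2}}$ as $G^{2}\leq 2\lambda L$, turn it into a deterministic inequality about the Gaussian entries of $\dm$, and then invoke the two concentration tools recorded in Lemmas~\ref{lem:aappen} and~\ref{lem:bappen}.

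First, I would turn $L$ and $G$ into explicit functions of $\dm$ and the problem data. For the squared loss $g(\xm)=\|\ym-\dm\xm\|_{F}^{2}$ we have $\nabla g(\xm)=2\dm^{T}(\dm\xm-\ym)$, so the Lipschitz constant $L$ is determined by the spectrum of $\dm$ and any admissible $G$ is determined by $\dm$ together with the size of the residual $\dm\xm^{t}-\ym$. By Lemma~\ref{lem:2appen} the objective is monotonically non-increasing along the iterates, so $\|\dm\xm^{t}-\ym\|_{F}^{2}=g(\xm^{t})\leq F(\xm^{0})=x_{0}+\lambda|S|$ for every $t$, which yields a concrete upper bound on $\sigma_{\max}(\nabla g(\xm^{t}))$ expressed purely through $\dm$ and $x_{0}+\lambda|S|$.

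Second, I would apply the two concentration lemmas. Each column $\dm_{i}\in\mathbb{R}^{d}$ satisfies $\|\dm_{i}\|^{2}\sim\chi^{2}_{d}$, so Lemma~\ref{lem:aappen} applied with $t=a$ yields $\|\dm_{i}\|^{2}\leq d+2\sqrt{da}+2a$ with probability at least $1-e^{-a}$; a union bound over the $n$ columns controls $\max_{i}\|\dm_{i}\|^{2}$ with probability at least $1-ne^{-a}$. Viewing $\dm^{T}/\sqrt{n}$ as an $n\times d$ matrix with $N(0,1/n)$ entries and applying Lemma~\ref{lem:bappen} with $t=a/\sqrt{n}$ gives $\sigma_{d}(\dm)\geq\sqrt{n}-\sqrt{d}-a$ with probability at least $1-e^{-a^{2}/2}$. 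A union bound makes both events hold simultaneously with probability at least $1-e^{-a^{2}/2}-ne^{-a}$, matching the conclusion of the theorem.

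Third, on the intersection of these events the two estimates reduce the inequality $G^{2}\leq 2\lambda L$ to $\lambda(\sqrt{n}-\sqrt{d}-a)^{2}\geq(d+2\sqrt{da}+2a)(x_{0}+\lambda|S|)$; taking square roots puts this in the form $\sqrt{n}\geq\sqrt{d}+a+\sqrt{(d+2\sqrt{da}+2a)(x_{0}+\lambda|S|)/\lambda}$, which is exactly the hypothesis of the theorem, finishing the argument.

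The main obstacle is the deterministic step of selecting matching bounds for $L$ and $G$. A naive pairing such as $\sigma_{\max}(\nabla g(\xm^{t}))\leq 2\sigma_{1}(\dm)\sqrt{x_{0}+\lambda|S|}$ with $L=2\sigma_{1}(\dm)^{2}$ cancels the $\sigma_{1}(\dm)$ factors and turns the desired inequality into a constraint on $x_{0}$ rather than on $n$. The trick is to route the Lipschitz estimate through $\sigma_{d}(\dm)$ (which is of order $\sqrt{n}$ for a wide Gaussian $\dm$) and the gradient-norm estimate through the column norms $\|\dm_{i}\|$ (which concentrate around $\sqrt{d}$). Once this separation is in place, the $\chi^{2}_{d}$ tail from Lemma~\ref{lem:aappen} and the Davidson-Szarek tail from Lemma~\ref{lem:bappen} line up and the condition on $n$ falls out of the algebra.
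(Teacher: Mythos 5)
Your proposal follows essentially the same route as the paper's proof: Lemma~\ref{lem:bappen} (applied to the rescaled $\dm^{T}/\sqrt{n}$) gives the lower bound $\sqrt{n}-\sqrt{d}-a$ on the singular value controlling $L$, Lemma~\ref{lem:aappen} with a union bound over the $n$ columns gives $\max_i\|\dm_i\|_2^2\leq d+2\sqrt{da}+2a$ controlling $G$ through the monotone residual bound $g(\xm^t)\leq x_0+\lambda|S|$, and a final union bound yields the probability $1-e^{-a^2/2}-ne^{-a}$ together with the stated condition on $n$. You actually spell out the deterministic pairing of the two estimates (routing $L$ through the smallest singular value and $G$ through the column norms) more explicitly than the paper, which compresses that step into ``it can be verified.''
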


\begin{proof}
    Based on Lemma~\ref{lem:bappen}, for any $a > 0$, with probability $\geq 1- e^{-\frac{a^2}{2}}$,
    \begin{equation}
        \sigma_{max}(\dm) > \sqrt{n} - \sqrt{d} - a,
    \end{equation}
    and by Lemma~\ref{lem:aappen}, for any $1 \leq i \leq n$ and $a > 0$, with probability $\geq 1- e^{-a}$, 
    \begin{equation}
        \|\dm_i\|_2 \leq \sqrt{d+2\sqrt{da} + 2a},
    \end{equation}
    where $\dm_i$  denotes $i$-th column of $\dm$. Then, it can be verified with the union bound that with probability $\geq 1-e^{-\frac{a^2}{2}} - ne^{-a}$, 
    \begin{equation}
        \frac{2D^2(x_0+\lambda|S|)}{\lambda} \leq 2\sigma_{max}^2(\dm),
    \end{equation}
    where $D = \max_i \|i_{th} \textrm{ column of }\dm\|_2$,
    if 
    \begin{equation}\label{eq:unres}
    n \geq (\sqrt{d} + a + \sqrt{\frac{(d+2\sqrt{da}+2a)(x_0 + \lambda |S|))}{\lambda}})^2.
\end{equation}
\end{proof}

\begin{lemma}\label{lem:3appen}
(a) All the elements of each subsequence $\xt^k$ ($k = 1, \dots, K)$ in the subsequences with shrinking support have the same support. In addition, for any $1 \leq k_1 < k_2 \leq K$ and any $\xm^{t_1} \in \xt^{k_1}$, and $\xm^{t_2} \in \xt^{k_2}$, we have $t_1 < t_2$ and $\sp(\sigma(\xm^{t_2})) \subset \sp(\sigma(\xm^{t_1}))$. 

(b) All the subsequences except for the last one, $\xt^k$ ($k = 1, \dots, K-1)$ have finite size, and $\xt^K$ have an infinite number of elements, and there exists some $t_0 \geq 0$ such that $\{\xm^t\}_{t=t_0}^\infty \subseteq \xt^K$.
\end{lemma}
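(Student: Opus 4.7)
The plan is to bootstrap everything from Lemma~\ref{lem:1}, which guarantees the nested-support chain $\sp(\sigma(\xm^{t+1})) \subseteq \sp(\sigma(\xm^{t}))$ along the iterations. The first step is to record the immediate corollary that the integer-valued sequence $a_t := |\sp(\sigma(\xm^{t}))|$ is non-increasing in $t$, since set containment forces the corresponding inequality on cardinalities.

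For part (a), once $\{a_t\}$ is non-increasing, the preimage $\{t : a_t = k'\}$ of each value $k'$ is a (possibly empty) contiguous block of indices. On such a block the supports themselves form a descending chain of sets all of the same cardinality $k'$, so they must coincide. This proves that all elements of any subsequence $\xt^k$ share a single common support. For the ordering claim, Definition~\ref{def:1} orders the nonempty subsequences by decreasing common support size, so $k_1 < k_2$ implies the common support size of $\xt^{k_1}$ strictly exceeds that of $\xt^{k_2}$. Combined with the monotonicity of $\{a_t\}$, this forces $t_1 < t_2$ for every $\xm^{t_1} \in \xt^{k_1}$ and $\xm^{t_2} \in \xt^{k_2}$, and iterating Lemma~\ref{lem:1} along $t_1, t_1+1, \ldots, t_2$ yields $\sp(\sigma(\xm^{t_2})) \subseteq \sp(\sigma(\xm^{t_1}))$; strictness follows from the cardinality gap.

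For part (b), since $\{a_t\}$ is a non-increasing sequence of non-negative integers, it eventually stabilizes: there exist $t_0 \geq 0$ and $k^\ast \geq 0$ with $a_t = k^\ast$ for all $t \geq t_0$. By part (a), all tail iterates $\{\xm^{t}\}_{t \geq t_0}$ then share a common support, so they all lie in a single subsequence, which by the decreasing-size ordering of Definition~\ref{def:1} must be the last one, $\xt^K$. This gives simultaneously $\{\xm^{t}\}_{t \geq t_0} \subseteq \xt^K$ and the infiniteness of $\xt^K$. Conversely, any $\xt^k$ with $k < K$ has strictly larger common support size than $k^\ast$, so by monotonicity of $\{a_t\}$ none of its indices can exceed $t_0 - 1$, making each such $\xt^k$ finite.

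The argument is essentially careful bookkeeping on top of Lemma~\ref{lem:1}, so I do not anticipate a real obstacle. The only minor trap to watch is the direction of the indexing convention: smaller $k$ in Definition~\ref{def:1} corresponds to \emph{larger} support size, so the infinite tail must be identified with $\xt^K$ rather than $\xt^1$, and the stabilization argument in part (b) has to be phrased accordingly.
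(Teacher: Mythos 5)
Your proof is correct, and it rests on the same foundation as the paper's --- the support-shrinkage property of Lemma~\ref{lem:1} --- but the execution is genuinely different and cleaner. The paper proves part (b) by contradiction: it assumes some $\xt^k$ with $k \leq K-1$ is infinite, extracts from it an increasing index sequence $\{t_j\}$ with $t_j \rightarrow \infty$, picks an element $\xm^q \in \xt^{k+1}$, and derives contradictory inequalities on support cardinalities; it then establishes the tail containment $\{\xm^t\}_{t=t_0}^{\infty} \subseteq \xt^K$ by a separate sandwiching argument between consecutive extracted indices. You instead observe that $a_t = |\sp(\sigma(\xm^t))|$ is a non-increasing sequence of non-negative integers and therefore stabilizes at its minimum value $k^\ast$ from some $t_0$ onward; finiteness of the earlier subsequences, infiniteness of $\xt^K$, and the tail containment then all fall out of that single observation simultaneously. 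Your part (a) is likewise a direct \emph{contiguous level sets of a monotone integer sequence} argument, where the paper argues by contradiction with the equal-cardinality requirement of Definition~\ref{def:1}; both hinge on the same elementary fact that a containment between finite sets of equal cardinality is an equality. The two routes deliver identical conclusions; yours trades the paper's case analysis and extraction construction for one structural fact about monotone integer sequences, which makes the logic easier to audit. I see no gaps.
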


\begin{proof}
    (a)
    For any $1 \leq k < K$, let $\xm^{t_1}, \xm^{t_2} \in \xt^k$ and $t_1 \neq t_2$. 
    If $t_1 < t_2$, then $\sp(\sigma(\xm^{t_2})) \subseteq \sp(\sigma(\xm^{t_1}))$ according to the support shrinkage property in Lemma~\ref{lem:1appen}. If $\sp(\sigma(\xm^{t_2})) \subset \sp(\sigma(\xm^{t_1}))$ then $|\sp(\sigma(\xm^{t_2}))| < |\sp(\sigma(\xm^{t_1}))|$, which contradicts with the definition of $\xt^k$ whose elements have the same support size. A similar argument holds for $t_2 < t_1$. Therefore, all the elements of each subsequence $\xt^k (1 \leq k \leq K)$ have the same support.

    For any $1 \leq k_1 \leq k_2 \leq K$ and any $\xm^{t_1} \in \xt^{k_1}$ and $\xm^{t_2} \in \xt^{k_2}$, note that $t_1 \neq t_2$ and $\sp(\sigma(\xm^{t_1})) \neq \sp(\sigma(\xm^{t_2}))$ since $\xt^{k_1}$ and $\xt^{k_2}$ have different support size. Suppose $t_1 > t_2$, we have $\sp(\sigma(\xm^{t_1})) \subset \sp(\sigma(\xm^{t_2}))$ and it follows that $|\sp(\sigma(\xm^{t_1}))| < 
|\sp(\sigma(\xm^{t_2}))|$, again it contradicts with the Definition~\ref{def:1}. Thus, we must have $t_1 < t_2$, and it follows that   $\sp(\sigma(\xm^{t_2})) \subset \sp(\sigma(\xm^{t_1}))$.

(b)
Suppose $\xt^k$ is an infinite sequence for some $1 \leq k \leq K-1$. We can get an infinite sequence from $\xt^k$ as follows:

We have some $\xm^{t_0} \in \xt^k$ for some $t_0 > 0$ since $\xt^k$ is not empty. Suppose we get $\{\xm^{t'_j}\}_{j' = 0}^j$ in the first $j \geq 0$ steps with increasing indices $\{t'_j\}$. Since $\xt^k$ is an infinite sequence, $\xt^k \backslash \{\xm^{t'_j}\}_{j' = 0}^j $ is still an infinite sequence. At the $(j+1)$-th step, we can find $\xm^{t_{j+1}} \in \xt^k \backslash \{\xm^{t'_j}\}_{j' = 0}^j $ with $t_{j+1} > t_j$. Therefore, we are able to get an infinite sequence $\{\xm^{t_j}\}_{j' = 0}^{\infty} \subseteq \xt^k$ with increasing indices $\{t_j\}$. With the fact that the indices $\{t_j\}$ is increasing, we can see that $\lim_{j \rightarrow \infty} t_j = \infty$.

For any element $\xm^q \in \xt^{k+1}$, there must exist some $j > 0$ such that $q \leq t_j$, according to the support shrinkage property we must have $\sp(\sigma(\xm^{t_j})) \subseteq \sp(\sigma(\xm^{q}))$, and $|\sp(\sigma(\xm^{t_j}))| \leq |\sp(\sigma(\xm^{q}))|$. 
On the other hand, since $\xm^{t_j} \in \xt^k$, we have $|\sp(\sigma(\xm^{q}))| < |\sp(\sigma(\xm^{t_j}))|$.
This contradiction shows that each $\xt^k (1\leq k \leq K-1)$ must have a finite size. 
Also, $\{\xm^t\}_{t=0}^{\infty}$ is an infinite sequence and $\{\xt^k\} _{k=1}^{K}$ form a disjoint cover of it, thus $\xt^K$ must contain infinite number of elements.  

According to the proof of (a), there exists an infinite sequence $\{\xm^{t_j}\}_{j=0}^{\infty} \subseteq \xt^K$, and $\lim_{j \rightarrow \infty} t_j = \infty$. For any $t > t_0$, there must be some $t'_j$ with $j' \geq 1$ such that $t_{j'-1} \leq t \leq t_{j'}$. Then we have 
\begin{equation}
    \sp(\sigma(\xm^{t_{j'}})) = S^* \subseteq \sp(\sigma(\xm^t)) \subseteq \sp(\sigma(\xm^{t_{j'-1}})) = S^*,
\end{equation}
therefore we have $|\sp(\sigma(\xm^t))| = |S^*|$ and $\xm^t \in \xt^K$ for any $t \geq t_0$, which is $\{\xm^{t}\}_{t=t_0}^{\infty} \subseteq \xt^K$.
\end{proof}

\begin{theorem}\label{thm:2appen}
    Suppose $s \leq \min \{\frac{2\lambda}{G^2}, \frac{1}{L}\}$. and $\xm^*$ is a limit point of $\{\xm^{t}\}_{t=0}^{\infty}$, 
    and $\sigma(\xm^*)$ is a limit point of $\{\sigma(\xm^{t})\}_{t=0}^{\infty}$, 
    then the sequence $\{\xm^{t}\}_{t=0}^{\infty}$ generated by Algorithm~\ref{alg:1} converges to $\xm^*$,
    $\xm^*$ is a critical point of F($\cdot$), and $\sp(\sigma(\xm^*)) = S^*$, where $S^*$ is the support of any element in $\xt^K$.
    Moreover, there exists $t_0 \geq 0$ such that for all $m \geq t_0$, we have
    \begin{equation}
        F(\xm^{m+1}) - F(\xm^*) \leq \frac{1}{2s(m-t_0+1)}\|\xm^{t_0} - \xm^*\|_F^2.
    \end{equation}
\end{theorem}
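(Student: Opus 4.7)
The plan is to split the proof into four steps that use Lemma~\ref{lem:3appen}(b) as the pivot: stabilizing the support of the iterates, pinning down the rank structure of $\xm^*$, establishing criticality, and finally mimicking the classical convex proximal-gradient rate analysis on the tail of the sequence.

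First I would apply Lemma~\ref{lem:3appen}(b) to pick $t_0 \geq 0$ such that $\{\xm^t\}_{t \geq t_0} \subseteq \xt^K$. Consequently every $\xm^t$ with $t \geq t_0$ has $\sp(\sigma(\xm^t)) = S^*$ and rank exactly $|S^*|$, and since these singular values survived the hard-thresholding step, $\sigma_i(\xm^t) > \sqrt{2\lambda s}$ for $i \in S^*$ and $\sigma_i(\xm^t) = 0$ otherwise. Using continuity of the singular-value map on matrices together with the hypothesis that $\sigma(\xm^*)$ is a limit point of $\{\sigma(\xm^t)\}$, I would pass to the limit along the convergent subsequence to deduce $\sigma_i(\xm^*) = 0$ for $i \notin S^*$ and $\sigma_i(\xm^*) \geq \sqrt{2\lambda s} > 0$ for $i \in S^*$. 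Therefore $\sp(\sigma(\xm^*)) = S^*$ and, crucially, $h(\xm^*) = \lambda|S^*| = h(\xm^t)$ for every $t \geq t_0$, so the nonconvex regularizer is effectively constant along the tail.

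Because the active singular values of $\bar{\xm}^{t+1} = \xm^t - s\nabla g(\xm^t)$ remain strictly above and the inactive ones strictly below the threshold $\sqrt{2\lambda s}$, the operator $T_{\sqrt{2\lambda s}}$ is continuous at $\xm^* - s\nabla g(\xm^*)$; taking the limit in the recursion $\xm^{t+1} = T_{\sqrt{2\lambda s}}(\xm^t - s\nabla g(\xm^t))$ along the convergent subsequence then yields the fixed-point identity $\xm^* = T_{\sqrt{2\lambda s}}(\xm^* - s\nabla g(\xm^*))$, i.e.\ the critical-point condition. For the rate, I would exploit the cancellation $h(\xm^{t+1}) = h(\xm^*)$: setting $\vm = \xm^*$ in the prox-minimization inequality $\frac{1}{2s}\|\xm^{t+1} - \bar{\xm}^{t+1}\|_F^2 + h(\xm^{t+1}) \leq \frac{1}{2s}\|\xm^* - \bar{\xm}^{t+1}\|_F^2 + h(\xm^*)$ removes the nonconvex term, and expanding this together with convexity of $g$ and the descent lemma (using $s \leq 1/L$) should reproduce the classical single-step telescoping bound
\begin{equation*}
F(\xm^{t+1}) - F(\xm^*) \leq \frac{1}{2s}\bigl(\|\xm^t - \xm^*\|_F^2 - \|\xm^{t+1} - \xm^*\|_F^2\bigr).
\end{equation*}
Summing from $t = t_0$ to $m$ telescopes the right-hand side to $\frac{1}{2s}\|\xm^{t_0} - \xm^*\|_F^2$, and the monotonicity from Lemma~\ref{lem:2appen} promotes this into $(m - t_0 + 1)(F(\xm^{m+1}) - F(\xm^*)) \leq \frac{1}{2s}\|\xm^{t_0} - \xm^*\|_F^2$, which is the claimed bound; the same single-step inequality also gives $\|\xm^{t+1} - \xm^*\|_F \leq \|\xm^t - \xm^*\|_F$, which combined with $\xm^*$ being a limit point forces $\xm^t \to \xm^*$.

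The main obstacle is deriving the single-step telescoping estimate. The classical convex-case argument relies on the subgradient inequality for $h$, which is unavailable since the rank penalty is nonconvex. The way through is to use the structural cancellation $h(\xm^{t+1}) = h(\xm^*)$ from the first two steps so that only the strongly convex quadratic part of the prox subproblem contributes, and then to argue carefully that the Eckart--Young-type minimizing property of $\xm^{t+1}$ over rank-$|S^*|$ matrices yields the needed $-\|\xm^{t+1} - \xm^*\|_F^2$ term; this becomes possible precisely because on the tail the singular values of $\bar{\xm}^{t+1}$ are bounded strictly away from both each other and the threshold, so that the truncated-SVD projection behaves locally like a smooth manifold projection. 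It is exactly here that the step-size restriction $s \leq \min\{2\lambda/G^2, 1/L\}$, propagated through Lemmas~\ref{lem:1appen} and \ref{lem:3appen}, is essential.
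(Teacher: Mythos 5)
Your skeleton matches the paper's (stabilize the support via Lemma~\ref{lem:3appen}(b), identify $S^*$, derive a per-iteration inequality with $\vm=\xm^*$, telescope, and invoke the monotone decrease of $F$), and your identification of $\sp(\sigma(\xm^*))=S^*$ — surviving singular values exceed $\sqrt{2\lambda s}$ after hard thresholding, hence are $\geq\sqrt{2\lambda s}$ in the limit — is in fact cleaner than the paper's contradiction argument. But the step you yourself call ``the main obstacle'' is not closed, and it is the heart of the theorem. Cancelling $h$ in the prox inequality only gives $\frac{1}{2s}\|\xm^{t+1}-\bar{\xm}^{t+1}\|_F^2\leq\frac{1}{2s}\|\xm^*-\bar{\xm}^{t+1}\|_F^2$, and combining this with the descent lemma and convexity of $g$ yields $F(\xm^{t+1})-F(\xm^*)\leq\frac{1}{2s}\|\xm^{t}-\xm^*\|_F^2$ with \emph{no} $-\frac{1}{2s}\|\xm^{t+1}-\xm^*\|_F^2$ term; summing that gives an $O(1)$ bound, not $O(1/t)$. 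The missing term is equivalent to the obtuse-angle inequality $\langle\xm^*-\xm^{t+1},\,\xm^{t+1}-\bar{\xm}^{t+1}\rangle\geq 0$, i.e.\ a variational inequality for the projection onto the nonconvex set of matrices of rank at most $|S^*|$, which fails for nonconvex sets in general. Your proposed rescue — Eckart--Young plus ``the truncated-SVD projection behaves locally like a smooth manifold projection'' — rests on the unsupported premise that the singular values of $\bar{\xm}^{t+1}$ stay uniformly separated from each other and from the threshold (nothing in the assumptions provides this), and even granting it, local regularity of the projection near $\bar{\xm}^{t+1}$ says nothing about an inequality against the far point $\xm^*$, which need not be close to $\xm^{t+1}$ for $t$ just past $t_0$. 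The paper closes this step differently: it writes the prox optimality condition explicitly as $-\nabla g(\xm^t)-\frac{1}{s}(\xm^{t+1}-\xm^t)=\frac{1}{s}\sum_{i\notin\sp(\sigma(\xm^{t+1}))}\sigma_i\uv_i\vv_i^T$ (SVD of $\bar{\xm}^{t+1}$), substitutes this concrete matrix into the descent/convexity chain in Eq.~(\ref{eq:33}), and uses its orthogonality to $\xm^{t+1}$ to reach $F(\xm^{t+1})\leq F(\vm)+\frac{1}{s}\langle\xm^{t+1}-\xm^t,\vm-\xm^t\rangle-\frac{1}{2s}\|\xm^{t+1}-\xm^t\|_F^2$; any repair of your argument must engage with the cross term $\langle\sum_{i\notin S^*}\sigma_i\uv_i\vv_i^T,\xm^*\rangle$ rather than with an abstract projection property.

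There is a second, smaller gap in your criticality step. The fixed-point argument needs $T_{\sqrt{2\lambda s}}$ to be continuous at $\xm^*-s\nabla g(\xm^*)$, for which you assert the active singular values are \emph{strictly} above the threshold; in the limit you can only conclude $\geq\sqrt{2\lambda s}$, and if an active singular value of $\xm^*-s\nabla g(\xm^*)$ equals the threshold exactly, the hard-thresholding operator is discontinuous there and the limit exchange fails. The paper sidesteps this: from the prox optimality condition, $\nabla g(\xm^{t_j})-\nabla g(\xm^{t_j-1})-\frac{1}{s}(\xm^{t_j}-\xm^{t_j-1})\in\partial F(\xm^{t_j})$, whose norm is at most $(L+\frac{1}{s})\|\xm^{t_j}-\xm^{t_j-1}\|_F\to 0$ by the sufficient decrease of Lemma~\ref{lem:2appen}, and together with $F(\xm^{t_j})\to F(\xm^*)$ (the rank term being constant $\lambda|S^*|$ on the tail) this yields $0\in\partial F(\xm^*)$ by closedness of the subdifferential. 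Your Fej\'er-monotonicity route to whole-sequence convergence is a nice touch, but it inherits the same dependence on the unproved single-step telescoping inequality.
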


\begin{proof}
    Let $S^*$ denote the support of any element in $\xt^K$. First we have $\sp(\sigma(\xm^*)) \subseteq S^*$, otherwise, pick an arbitrary $i \in \sp(\sigma(\xm^*)) \backslash S^*$, then $\|\sigma(\xm^{t_j}) - \sigma(\xm^*)\|_2 \geq  |\sigma_i (\xm^*)|$ contradicts with the fact that $\sigma(\xm^{t_j}) \rightarrow \sigma(\xm^*)$.

    Moreover, suppose $\sp(\sigma(\xm^*)) \subset S^*$, we can pick an arbitrary $i \in S^* \backslash \sp(\sigma(\xm^*))$. And it can be shown that $\sigma_i(\xm^{t_j}) \rightarrow 0$. Otherwise there exists $\epsilon > 0$, for any $j$, there exists $j' \geq j$ such that $|\sigma_i(\xm^{t_{j'}})| \geq \epsilon$. It follows that $\|\sigma(\xm^{t_{j'}}) - \sigma(\xm^*) \|_2 \geq |\sigma_i(\xm^{t_{j'}}| \geq \epsilon$, contradicting with the fact that $\sigma(\xm^{t_j}) \rightarrow \sigma(\xm^*)$.

Let $\epsilon > 0$ be a sufficiently small positive number such that $sG + \epsilon < \sqrt{2\lambda s}$. Since $\sigma_i (\xm^{t_j}) \rightarrow 0$, there exists sufficiently large $j$ such that $|\sigma_i(\xm^{t_j})| < \epsilon$. Let $\bar{\xm}^{t_j+1} = \xm^{t_j} - s\nabla g(\xm^{t_j})$, then
\begin{equation}
    |\sigma_i (\bar{\xm}^{t_j+1})| \leq |\sigma_i (\xm^{t_j})| + sG < \epsilon + sG \leq \sqrt{2\lambda s}.
\end{equation}
Then according to the update rule we have $\sigma_i(\xm^{t_j+1}) = 0$, so $\sp(\sigma(\xm^{t_j+1})) \subseteq \sp(\sigma(\xm^{t_j})) \backslash \{i\}$. On the other hand, $\xm^{t_j+1} \in \xt^k$, so we have $\sp(\sigma(\xm^{t_j+1})) = \sp(\sigma(\xm^{t_j}))$. Such contradict shows that $\sp(\sigma(\xm^*)) \subset S^*$ cannot be true. So $\sp(\sigma(\xm^*)) = S^*$.

Now we will show that $\{\xm^{t}\}_{t=t_0}^{\infty}$ converges to $\xm^*$. 

For any $\vm, \um$, we have
\begin{equation}
    g(\vm) \leq g(\um) + \langle \nabla g(\um), \vm - \um \rangle + \frac{L}{2}\|\vm-\um\|_F^2,
\end{equation}
also since $g(\cdot)$ is convex, for any $\vm$ and $t \geq 0$:
\begin{equation}
    g(\xm^{t+1}) + \langle \nabla g(\xm^{t+1}), \vm - \xm^{t+1} \rangle \leq g(\vm).
\end{equation}
    Also, since
    \begin{equation}
        \xm^{t+1} = \argmin_{\vm} \frac{1}{2s} \|\vm - (\xm^t - s\nabla g(\xm^t))\|^2_F + h(\vm),
    \end{equation}
    we have
    \begin{equation}
        -\nabla g(\xm^t) - \frac{1}{s} (\xm^{t+1} -\xm^t) \in \partial h(\xm^{t+1}),
    \end{equation}
    and 
    \begin{equation}\begin{split}
        &\frac{1}{s}(\xm^{t+1} - (\xm^t - s\nabla g(\xm^t))) + \partial h(\xm^{t+1}) = 0,\\
        & \frac{1}{s}(\sum_{i \in \sp(\sigma(\xm^{t+1}))} \sigma_i \uv_i \vv_i^T - \sum_{i} \sigma_i \uv_i \vv_i^T) + \partial h(\xm^{t+1}) = 0,
    \end{split}\end{equation}
    where $\xm^t - s\nabla g(\xm^t) = \sum_i \sigma_i \uv_i \vv_i^T$ is the singular value decomposition,
    then it follows
    \begin{equation}
        \partial h(\xm^{t+1}) = \frac{1}{s} \sum_{i \notin \sp(\sigma(\xm^{t+1}))} \sigma_i \uv_i \vv_i^T,
    \end{equation}
therefore 
\begin{equation}
    \langle \partial h(\xm^{t+1}), \xm^{t+1} \rangle = 0.
\end{equation}
    For any matrix $\vm$ such that $\sp(\sigma(\vm)) = \sp(\sigma(\xm^{t+1}))$, we have $h(\vm) = h(\xm^{t+1}) + \langle \partial h(\xm^{t+1}), \xm^{t+1} \rangle$. 

For $t \geq t_0$, we have 
\begin{equation}\begin{split}\label{eq:33}
    F(\xm^{t+1}) & \leq g(\xm^t) + \langle \nabla g(\xm^t), \xm^{t+1} -\xm^t \rangle + \frac{L}{2} \|\xm^{t+1}-\xm^t\|^2_F + h(\xm^{t+1}) \\
    & \leq g(\vm) + \langle \nabla g(\xm^t), \xm^{t+1}-\vm \rangle + \langle \nabla g(\xm^t), \xm^{t+1} -\xm^t \rangle + \frac{L}{2} \|\xm^{t+1}-\xm^t\|^2_F + h(\xm^{t+1}) \\
    & = g(\vm) + \langle \nabla g(\xm^t), \xm^{t+1} - \vm \rangle + \frac{L}{2} \|\xm^{t+1}-\xm^t\|^2_F + h(\xm^{t+1}) \\
& = g(\vm) + \langle \nabla g(\xm^t), \xm^{t+1} - \vm \rangle + \frac{L}{2} \|\xm^{t+1}-\xm^t\|^2_F + h(\vm) + \langle \nabla g(\xm^t) + \frac{1}{s} (\xm^{t+1} - \xm^t), \vm-\xm^{t+1} \rangle \\
& = F(\vm) + \frac{1}{s} \langle \xm^{t+1} -\xm^t, \vm-\xm^{t+1} \rangle + \frac{L}{2} \|\xm^{t+1} - \xm^t\|^2_F \\    
& = F(\vm) + \frac{1}{s} \langle \xm^{t+1} -\xm^t, \vm-\xm^{t} \rangle -\frac{1}{s} \|\xm^{t+1} -\xm^t \|^2_F + \frac{L}{2} \|\xm^{t+1} - \xm^t\|^2_F \\  
& = F(\vm) + \frac{1}{s}  \langle \xm^{t+1} -\xm^t, \vm-\xm^{t} \rangle -(\frac{1}{s} - \frac{L}{2}) \|\xm^{t+1} - \xm^t\|^2_F  \\
    & \leq F(\vm) + \frac{1}{s} \langle \xm^{t+1} - \xm^t, \vm - \xm^t \rangle - \frac{1}{2s} \|\xm^{t+1} - \xm^t\|^2_F.
\end{split}\end{equation}
Now suppose $\sp(\sigma(\xm^*)) = \sp(\sigma(\xm^{t+1})) = S^*$, let $\vm = \xm^*$, we have
\begin{equation}
    F(\xm^{t+1}) - F(\xm^*) \leq \frac{1}{s} \langle \xm^{t+1} - \xm^t, \xm^* - \xm^t \rangle  - \frac{1}{2s} \|\xm^{t+1} - \xm^t\|^2_F = \frac{1}{2s} (\|\xm^{t} - \xm^*\|^2_F - \|\xm^{t+1} - \xm^*\|^2_F).
\end{equation}
    Now, sum the above equation over $t = t_0,\dots,m$ with $m \geq t_0$, we get
\begin{equation}
    \sum_{t = t_0}^m F(\xm^{t+1}) - F(\xm^*) \leq
    \sum_{t = t_0}^m \frac{1}{2s} (\|\xm^{t} - \xm^*\|^2_F - \|\xm^{t+1} - \xm^*\|^2_F) 
    = \frac{1}{2s} (\|\xm^{t_0} - \xm^*\|^2_F - \|\xm^{m+1} - \xm^*\|^2_F).
\end{equation}
Since $\{F(\xm^t)\}_t$ is non-increasing,
$\sum_{t = t_0}^m F(\xm^{t+1}) - F(\xm^*) > (m - t_0 +1) F(\xm^{m+1}) - F(\xm^*) $, therefore,
\begin{equation}
    F(\xm^{m+1}) - F(\xm^*) \leq \frac{1}{2s(m - t_0 +1)} (\|\xm^{t_0} - \xm^*\|^2_F - \|\xm^{m+1} - \xm^*\|^2_F) \leq 
     \frac{1}{2s(m - t_0 +1)} (\|\xm^{t_0} - \xm^*\|^2_F). 
\end{equation}

Again, since $\xm^{t+1} = \argmin_{\vm} \langle \nabla g(\xm^t), \vm - \xm^t \rangle + \frac{1}{2s}\|\vm - \xm^t\|^2_F + h(\vm)$, then 
\begin{equation}
    \langle \nabla g(\xm^t), \xm^{t+1}-\xm^t \rangle + \frac{1}{2s}\|\xm^{t+1} - \xm^t \|^2_F + h(\xm^{t+1}) 
    \leq  \langle \nabla g(\xm^t), \xm^{t}-\xm^t \rangle + \frac{1}{2s}\|\xm^{t} - \xm^t \|^2_F + h(\xm^{t}) = h(\xm^t).
\end{equation}
Therefore, 
\begin{equation}\begin{split}
    &F(\xm^{t+1}) \leq g(\vm) + \langle \nabla g(\xm^t), \xm^{t+1}-\vm \rangle  + \frac{L}{2}\|\xm^{t+1} -\xm^t\|^2_F + h(\xm^{t+1}) \\
    & \leq g(\vm)  + \langle \nabla g(\xm^t), \xm^{t+1}-\vm \rangle + \frac{L}{2}\|\xm^{t+1} -\xm^t\|^2_F + h(\xm^t) - \langle \nabla g(\xm^t), \xm^{t+1}-\xm^t \rangle - \frac{1}{2s}\|\xm^{t+1} - \xm^t \|^2_F.
\end{split}\end{equation}
Let $\vm = \xm^t$, we get $F(\xm^{t+1}) \leq F(\xm^t) - (\frac{1}{2s} - \frac{L}{2}) \|\xm^{t+1} - \xm^t\|^2_F.$
Thus, we have
\begin{equation}
    (\frac{1}{2s} - \frac{L}{2}) \sum_{t=0}^{\infty} \|\xm^{t+1} - \xm^t\|^2_F
    \leq F(\xm^0) - F(\xm^*) < \infty,
\end{equation}
then
\begin{equation}
    \|\xm^{t+1} - \xm^t\|^2_F \rightarrow 0, \textrm{ as } t \rightarrow \infty.
\end{equation}

Now we show $\xm^*$ is a critical point of $F(\cdot)$. For $t_j \geq 1$, we have 
\begin{equation}
    \nabla g(\xm^{t_j}) - \nabla g(\xm^{t_j-1}) - \frac{1}{s}(\xm^{t_j} - \xm^{t_j-1}) \in \partial F(\xm^{t_j}),
\end{equation}
when $j \rightarrow \infty$ we have 
\begin{equation}\begin{split}
   \|\partial F(\xm^{t_j})\|_F & =  \|\nabla g(\xm^{t_j}) - \nabla g(\xm^{t_j-1}) - \frac{1}{s}(\xm^{t_j} - \xm^{t_j-1}) \|_F \\
   & \leq L\| \xm^{t_j} - \xm^{t_j-1}\|_F +\frac{1}{s}\|\xm^{t_j} - \xm^{t_j-1}\|_F \rightarrow 0.
\end{split}\end{equation}
Also, when $j \rightarrow \infty$,
\begin{equation}
    F(\xm^{t_j}) = g(\xm^{t_j}) + h(\xm^{t_j}) = g(\xm^{t_j}) + \lambda|S^*| 
    \rightarrow g(\xm^*) + \lambda|S^*|  = F(\xm^*).
\end{equation}
Therefore, $0 \in \partial F(\xm^*) $ and $\xm^*$ is a critical point. 
    \end{proof}

\section{Proofs for Subsection \ref{subsec:acc}}

\begin{lemma}\label{lem:4appen}
    The sequence $\{\xm^t\}_{t=1}^{\infty}$ generated by Algorithm~\ref{alg:2} satisfies
    \begin{equation}
        \sp(\sigma(\xm^{t+1})) \subseteq \sp(\sigma(\xm^{t})), t \geq 1.
    \end{equation}
\end{lemma}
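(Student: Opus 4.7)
The plan is to mirror the argument of Lemma~\ref{lem:1appen}, exploiting the fact that the support projection step $\vm^t = P_{\sp(\sigma(\xm^t))}(\um^t)$ already forces $\vm^t$ to have support contained in $\sp(\sigma(\xm^t))$. The only subtlety over Lemma~\ref{lem:1appen} is that the momentum iterate $\um^t$ may increase the rank (it is an affine combination of $\xm^t$ and $\xm^{t-1}$), so without the projection the chain of inclusions could fail; once the projection is applied, however, the gradient-plus-thresholding step behaves exactly as in the PGD case.

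Concretely, I would first observe from the definition of $P_{\sp(\sigma(\xm^t))}$ that $\sp(\sigma(\vm^t)) \subseteq \sp(\sigma(\xm^t))$. Thus it suffices to show $\sp(\sigma(\xm^{t+1})) \subseteq \sp(\sigma(\vm^t))$. Setting $\bar{\xm}^{t+1} = \vm^t - s\nabla g(\vm^t)$ and $\qm^t = -s\nabla g(\vm^t)$, Weyl's inequality $\sigma_{i+j-1}(\am+\bm) \leq \sigma_i(\am) + \sigma_j(\bm)$ applied with $j=1$ gives, for every index $i$ with $\sigma_i(\vm^t) = 0$,
\begin{equation}
    \sigma_i(\bar{\xm}^{t+1}) \leq \sigma_i(\vm^t) + \sigma_1(\qm^t) = \sigma_1(\qm^t) \leq sG,
\end{equation}
where the last inequality uses the bounded-singular-value assumption $\sigma_{\max}(\nabla g(\cdot)) \leq G$.

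Then, under the step-size condition $s \leq 2\lambda/G^2$, I would conclude $\sigma_i(\bar{\xm}^{t+1}) \leq sG \leq \sqrt{2\lambda s}$, so the hard-thresholding operator $T_{\sqrt{2\lambda s}}$ built into $\prox_h$ zeroes this singular value out, giving $\sigma_i(\xm^{t+1}) = 0$. This shows every index outside $\sp(\sigma(\vm^t))$ is also outside $\sp(\sigma(\xm^{t+1}))$, and chaining yields $\sp(\sigma(\xm^{t+1})) \subseteq \sp(\sigma(\vm^t)) \subseteq \sp(\sigma(\xm^t))$, as required.

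The main conceptual obstacle is not the estimate itself but identifying why the support projection is essential: the extrapolation $\um^t$ can introduce large singular components along directions absent from $\xm^t$, and these components would not be controlled by $sG$ after a gradient step, breaking the Weyl argument. Once $\vm^t$ is constrained to the current support, the rest is essentially the PGD proof, so I expect no technical difficulty beyond that observation.
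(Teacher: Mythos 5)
Your proposal is correct and follows essentially the same route as the paper's proof: the projection step gives $\sp(\sigma(\vm^t)) \subseteq \sp(\sigma(\xm^t))$, and the Weyl-inequality-plus-thresholding argument from Lemma~\ref{lem:1appen} applied at $\vm^t$ gives $\sp(\sigma(\xm^{t+1})) \subseteq \sp(\sigma(\vm^t))$. The paper packages this as an induction on $t$, but the inductive hypothesis is never actually invoked in its key step, so your direct chaining of the two inclusions is the same argument stated more economically.
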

\begin{proof}
    We will prove the above lemma using mathematical induction. 

    When $t=1$, we have $\um^1 = \xm^1, \vm^1 = \xm^1$, $\xm^2 = T_{\sqrt{2\lambda s}} (\xm^1 - s\nabla g(\xm^1))$,
    using the argument in the proof of Lemma~\ref{lem:1appen}, we have 
    \begin{equation}
        \sp(\sigma(\xm^2)) \subseteq \sp(\sigma(\xm^1)). 
    \end{equation}

    Suppose  $\sp(\sigma(\xm^{t+1})) \subseteq \sp(\sigma(\xm^{t}))$ holds for all $t \leq t'$ with $t' \geq 1$, now consider the case that $t = t'+1$. Based on the update rule for $\vm^t$, we have 
    \begin{equation}
        \sp(\sigma(\vm^{t'+1})) \subseteq \sp(\sigma(\xm^{t'+1})). 
    \end{equation}
    Let $\bar{\xm}^{t'+2} = \vm^{t'+1} - s\nabla g(\vm^{t'+1})$, then $\sigma_i(\xm^{t'+2}) = 0$ for any $i \notin \sp(\sigma(\vm^{t'+1}))$ since $\sigma_i(\bar{\xm}^{t'+2}) \leq \sqrt{2\lambda s}$ for such $i$. So the zero elements in $\sigma(\vm^{t'+1})$ remain unchanged in $\sigma(\xm^{t'+2})$, and it follows that 
    \begin{equation}
        \sp(\sigma(\xm^{t'+2})) \subseteq \sp(\sigma(\vm^{t'+1})) \subseteq \sp(\sigma(\xm^{t'+1})),
    \end{equation}
    therefore $\sp(\sigma(\xm^{t+1})) \subseteq \sp(\sigma(\xm^{t}))$ holds for $t = t'+1$. 
    Based on mathematical induction it holds for all $t \geq 1$.
\end{proof}

\begin{theorem}\label{thm:3appen}
      Suppose $s \leq \min \{\frac{2\lambda}{G^2}, \frac{1}{L}\}$. and $\xm^*$ is a limit point of $\{\xm^{t}\}_{t=0}^{\infty}$ generated by Algorithm~\ref{alg:2}, 
  then there exists $t_0 \geq 1$ such that for all $m \geq t_0$, we have
    \begin{equation}
        F(\xm^{m+1}) - F(\xm^*) \leq \frac{4}{(m+1)^2}V^{t_0},
    \end{equation}
    where $V^{t_0}$ is a value defined as
    \begin{equation}\begin{split}
        V^{t_0} =  \frac{1}{2s} \|(\alpha^{t_0-1}-1)\xm^{t_0-1} - \alpha^{t_0-1}\xm^{t_0} +\xm^*\|_F^2 + (\alpha^{t_0-1})^2(F(\xm^{t_0})-F(\xm^*)).
    \end{split}\end{equation}
\end{theorem}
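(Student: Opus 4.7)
The plan is to combine the support-stabilization phenomenon from Lemma~\ref{lem:4} with a FISTA-style potential-function argument, exploiting the fact that once the singular-value support freezes, the rank regularizer behaves like a constant and the analysis reduces to convex accelerated proximal gradient on $g$. The proof proceeds in three stages.

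First, I would establish a stabilization time $t_0$. By Lemma~\ref{lem:4} the supports $\sp(\sigma(\xm^t))$ are monotone in the subset order and live in the finite lattice of subsets of $\{1,\ldots,n\}$, so they must stabilize: there exist $t_0\geq 1$ and $S^*$ with $\sp(\sigma(\xm^t)) = S^*$ for all $t \geq t_0$. Arguing as in the proof of Theorem~\ref{thm:2} (the contradiction that any missing index $i \in S^*\setminus \sp(\sigma(\xm^*))$ would force $\sigma_i$ to be thresholded to zero) shows $\sp(\sigma(\xm^*)) = S^*$. Consequently, for $t \geq t_0$ we have $h(\xm^t)=h(\vm^t)=h(\xm^{t+1})=h(\xm^*)=\lambda|S^*|$, a constant.

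Second, I would derive a classical FISTA one-step inequality. For $t\geq t_0$, comparing $\xm^{t+1}$ with any rank-$|S^*|$ matrix $\xm$ in the proximal optimality condition
\begin{equation*}
h(\xm^{t+1}) + \tfrac{1}{2s}\|\xm^{t+1} - \vm^t + s\nabla g(\vm^t)\|_F^2 \leq h(\xm) + \tfrac{1}{2s}\|\xm - \vm^t + s\nabla g(\vm^t)\|_F^2,
\end{equation*}
makes the $h$-terms cancel. Adding the descent inequality $g(\xm^{t+1}) \leq g(\vm^t) + \langle \nabla g(\vm^t), \xm^{t+1}-\vm^t\rangle + \tfrac{L}{2}\|\xm^{t+1}-\vm^t\|_F^2$ (invoking $s\leq 1/L$) to the convexity inequality $g(\xm)\geq g(\vm^t)+\langle \nabla g(\vm^t),\xm-\vm^t\rangle$ then yields
\begin{equation*}
F(\xm^{t+1}) - F(\xm) \;\leq\; \tfrac{1}{2s}\bigl(\|\vm^t - \xm\|_F^2 - \|\xm^{t+1} - \xm\|_F^2\bigr),
\end{equation*}
which I would apply at $\xm = \xm^t$ and $\xm = \xm^*$.

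Third, I would run the standard FISTA telescoping. Define
\begin{equation*}
\tilde V^t := 2s(\alpha^{t-1})^2(F(\xm^t)-F(\xm^*)) + \|\alpha^{t-1}\xm^t - (\alpha^{t-1}-1)\xm^{t-1} - \xm^*\|_F^2,
\end{equation*}
so that $\tilde V^{t_0} = 2sV^{t_0}$. Weighting the two one-step bounds by $(\alpha^t-1)$ and $1$, summing and multiplying by $\alpha^t$, using the identity $(\alpha^t)^2 = \alpha^{t+1}(\alpha^{t+1}-1)$ and the key algebraic identity $\alpha^t \um^t - (\alpha^t-1)\xm^t = \alpha^{t-1}\xm^t - (\alpha^{t-1}-1)\xm^{t-1}$ that follows directly from the definition of $\um^t$, all cross terms telescope into $\|\wm^t - \xm^*\|_F^2 - \|\wm^{t+1}-\xm^*\|_F^2$ where $\wm^t := \alpha^{t-1}\xm^t - (\alpha^{t-1}-1)\xm^{t-1}$. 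This gives $\tilde V^{t+1}\leq \tilde V^t$ for $t\geq t_0$, hence $\tilde V^{m+1}\leq \tilde V^{t_0} = 2sV^{t_0}$. Combining with the standard bound $\alpha^m \geq (m+1)/2$ (inductively from $\alpha^{t+1} = (1+\sqrt{1+4(\alpha^t)^2})/2$) delivers $F(\xm^{m+1}) - F(\xm^*) \leq \tilde V^{t_0}/[2s(\alpha^m)^2] \leq 4V^{t_0}/(m+1)^2$.

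The hard part will be step three: the clean telescoping identity $\alpha^t\um^t - (\alpha^t-1)\xm^t = \wm^t$ uses the momentum iterate $\um^t$, but the one-step inequality from step two naturally involves $\vm^t = P_{\sp(\sigma(\xm^t))}(\um^t)$, which can differ from $\um^t$ whenever $\um^t$ has rank exceeding $|S^*|$ (generically possible since $\um^t$ is an affine combination of two rank-$|S^*|$ matrices). Reconciling these is the crux: one must either show that after stabilization the singular subspaces of $\xm^t$ and $\xm^{t-1}$ align so that $\um^t$ already has support in $S^*$ (making $\vm^t = \um^t$), or carefully bound the slack $\alpha^t(\vm^t-\um^t)$ so it can be absorbed into the Lyapunov function without degrading the $O(1/t^2)$ rate.
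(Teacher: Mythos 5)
Your outline follows the paper's proof almost step for step: support stabilization at some $t_0$ via Lemma~\ref{lem:4}, the three-point inequality evaluated at $\wm=\xm^t$ and $\wm=\xm^*$ once $h$ is frozen at $\lambda|S^*|$, the weighted combination with weights $\alpha^t-1$ and $1$, multiplication by $\alpha^t$, the identity $(\alpha^t)^2-\alpha^t=(\alpha^{t-1})^2$, and the telescoping of your $\tilde V^t$ together with $\alpha^m\geq(m+1)/2$. The one step you explicitly leave open --- reconciling the $\vm^t$ that appears in the one-step bound with the $\um^t$ needed for the telescoping identity $\alpha^t\um^t-(\alpha^t-1)\xm^t=\alpha^{t-1}\xm^t-(\alpha^{t-1}-1)\xm^{t-1}$ --- is the only genuinely nontrivial point of the whole argument, and as written your proposal does not close it, so it is incomplete exactly there.

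The paper closes it with a one-sided bound rather than either of the two options you suggest: it asserts that whenever $\sp(\sigma(\am))\subseteq\sp(\sigma(\cm))$ one has $\|\am-P_{\sp(\sigma(\cm))}(\bm)\|_F\leq\|\am-\bm\|_F$, and applies this with $\cm=\xm^t$, $\bm=\um^t$, and $\am=\frac{1}{\alpha^t}\left((\alpha^t-1)\xm^t+\xm^*\right)$. Since the term $\|(\alpha^t-1)\xm^t-\alpha^t\vm^t+\xm^*\|_F^2$ sits on the favorable side of the pre-telescoping upper bound, replacing $\vm^t$ by $\um^t$ only enlarges the right-hand side (yielding Eq.~(\ref{eq:51})), and the telescoping then proceeds verbatim; you never need $\vm^t=\um^t$, nor any quantitative control of the slack. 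That said, your suspicion about this step is well placed: the paper's projection inequality requires the comparison point, itself an affine combination of $\xm^t$ and $\xm^*$, to have singular-value support inside $\sp(\sigma(\xm^t))$, which runs into exactly the misalignment-of-singular-subspaces issue you raise for $\um^t$, and the paper asserts the inequality without proof. If you adopt the paper's fix you should state and justify that nonexpansiveness claim explicitly; otherwise your alternative of bounding $\alpha^t(\vm^t-\um^t)$ would require a decay estimate that neither you nor the paper supplies.
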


\begin{proof}
    According to Lemma~\ref{lem:4appen}, there exists $t_0 \geq 0$ such that $\{\xm^t\}_{t=t_0}^\infty \subseteq \xt^K$. It follows that $\sp(\sigma(\xm^*)) = S^*$. 

    When $\sp(\sigma(\wm)) = \sp(\sigma(\xm^{t+1}))$ for $t \geq t_0$, with the similar process in Eq.~(\ref{eq:33}), we get
    \begin{equation}
        F(\xm^{t+1}) \leq F(\wm) + \frac{1}{s} \langle \xm^{t+1} - \vm^t, \wm - \vm^t \rangle - (\frac{1}{s} - \frac{L}{2}) \|\xm^{t+1} - \vm^t\|^2_F,
    \end{equation}
    Let $\wm = \xm^t$ and $\wm = \xm^*$, we get
    \begin{equation}\label{eq:47}
        F(\xm^{t+1}) \leq F(\xm^t) + \frac{1}{s} \langle \xm^{t+1} - \vm^t, \xm^t - \vm^t \rangle - (\frac{1}{s} - \frac{L}{2}) \|\xm^{t+1} - \vm^t\|^2_F,
    \end{equation}
    and
    \begin{equation}\label{eq:48}
        F(\xm^{t+1}) \leq F(\xm^*) + \frac{1}{s} \langle \xm^{t+1} - \vm^t, \xm^* - \vm^t \rangle - (\frac{1}{s} - \frac{L}{2}) \|\xm^{t+1} - \vm^t\|^2_F,
    \end{equation}
    $(\alpha^t-1) \times $Eq.~(\ref{eq:47}) + Eq.~(\ref{eq:48}), we obtain
    \begin{equation}\begin{split}\label{eq:49}
        &\alpha^t F(\xm^{t+1}) - (\alpha^t -1)F(\xm^t) - F(\xm^*) \\
        \leq & \frac{1}{s} \langle \xm^{t+1} -\vm^t, (\alpha^t-1)(\xm^t-\vm^t) + \xm^* - \vm^t \rangle - \alpha^t(\frac{1}{s} - \frac{L}{2}) \|\xm^{t+1} - \vm^t\|^2_F.
    \end{split}\end{equation}
    Multiply both sides of Eq.~(\ref{eq:49}) by $\alpha^t$, and use the fact that $(\alpha^t)^2 - \alpha^t = (\alpha^{t-1})^2$, we have
    \begin{equation}\begin{split}
        & (\alpha^t)^2(F(\xm^{t+1}) - F(\xm^*)) - (\alpha^{t-1})^2(F(\xm^{t}) - F(\xm^*))  \\
        \leq  &
        \frac{1}{2s}(\|(\alpha^t-1)\xm^t - \alpha^t \vm^t + \xm^* \|^2_F - \|(\alpha^{t}-1)\xm^t - \alpha^t \xm^{t+1} + \xm^* \|^2_F).
    \end{split}\end{equation}
    Since for any matrix $\am, \bm, \cm$, when $\sp(\sigma(\am)) \subseteq \sp(\sigma(\cm))$, we have $\|\am - P_{\sp(\sigma(\cm))} (\bm)\|_F \leq \|\am-\bm\|_F$, and $\vm^t = P_{\sp(\sigma(\xm^t))} (\um^t)$, it follows that
    \begin{equation}\begin{split}\label{eq:51}
        &(\alpha^t)^2(F(\xm^{t+1}) - F(\xm^*)) - (\alpha^{t-1})^2(F(\xm^{t}) - F(\xm^*)) \\
        \leq &
        \frac{1}{2s}(\|(\alpha^t-1)\xm^t - \alpha^t \um^t + \xm^* \|^2_F - \|(\alpha^{t}-1)\xm^t - \alpha^t \xm^{t+1} + \xm^* \|^2_F).
    \end{split}\end{equation}
    For simplicity, we define $\am^{t+1} = (\alpha^t-1)\xm^t - \alpha^t \xm^{t+1} + \xm^*, \am^{t} = (\alpha^{t-1}-1)\xm^{t-1} - \alpha^{t-1} \xm^t + \xm^*$, according to the update rule for $\um^t$, we can get $\am^t = (\alpha^t-1)\xm^t - \alpha^t \um^t + \xm^*$, then based on Eq.~(\ref{eq:51}), we obtain
    \begin{equation}\label{eq:52}
        (\alpha^t)^2(F(\xm^{t+1}) - F(\xm^*)) - (\alpha^{t-1})^2(F(\xm^{t}) - F(\xm^*)) 
        \leq 
        \frac{1}{2s}(\|\am^t\|^2_F - \| \am^{t+1}\|^2_F).
    \end{equation}
    Sum Eq.~(\ref{eq:52}) over $t = t_0, \dots, m$ for $m \geq t_0$, we have
    \begin{equation}
         (\alpha^m)^2(F(\xm^{m+1}) - F(\xm^*)) - (\alpha^{t_0-1})^2(F(\xm^{t_0}) - F(\xm^*)) 
        \leq 
        \frac{1}{2s}(\|\am^{t_0}\|^2_F - \| \am^{m+1}\|^2_F)
        \leq 
        \frac{1}{2s}\|\am^{t_0}\|^2_F,
    \end{equation}
    therefore, with $\alpha^t \geq \frac{t+1}{2}$, we get
    \begin{equation}\begin{split}
        F(\xm^{m+1}) - F(\xm^*) 
        \leq &
        \frac{1}{2s(\alpha^m)^2} \|\am^{t_0}\|^2_F + \frac{(\alpha^{t_0-1})^2}{(\alpha^m)^2} (F(\xm^{t_0}) - F(\xm^*)) \\
        \leq  &
        \frac{4}{(m+1)^2}(\frac{1}{2s} \|\am^{t_0}\|^2_F + (\alpha^{t_0-1})^2(F(\xm^{t_0}) - F(\xm^*))),
    \end{split}\end{equation}
    where $\am^{t_0} = (\alpha^{t_0-1}-1)\xm^{t_0-1} - \alpha^{t_0-1}\xm^{t_0} +\xm^*$.
\end{proof}

\section{Proofs for Subsection \ref{subsec:mon}}

\begin{lemma}\label{lem:5appen}
    The sequence $\{\zm^t\}_{t=1}^{\infty}$ and $\{\xm^t\}_{t=1}^{\infty}$ generated by Algorithm~\ref{alg:3} satisfies
     \begin{equation}
        \sp(\sigma(\zm^{t+1})) \subseteq \sp(\sigma(\zm^{t})),
        \sp(\sigma(\xm^{t+1})) \subseteq \sp(\sigma(\xm^{t})), t \geq 1.
    \end{equation}
\end{lemma}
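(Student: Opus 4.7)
The plan is to split the lemma into two claims and chain them together. The $\zm$-claim is a straightforward adaptation of Lemma~\ref{lem:4appen} to Algorithm~\ref{alg:3}; the $\xm$-claim requires an auxiliary monotonicity statement $\sp(\sigma(\zm^t))\subseteq\sp(\sigma(\xm^t))$ to handle the conditional update $\xm^{t+1}\in\{\zm^{t+1},\xm^t\}$.

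First I would prove $\sp(\sigma(\zm^{t+1}))\subseteq\sp(\sigma(\zm^t))$ for $t\geq 1$ by induction. At $t=1$, the initialization $\zm^1=\xm^1=\xm^0$ gives $\um^1=\xm^1$ and hence $\vm^1=P_{\sp(\sigma(\zm^1))}(\xm^1)=\xm^1$, so the usual Weyl/thresholding argument used in Lemma~\ref{lem:1appen} and Lemma~\ref{lem:4appen} yields $\sp(\sigma(\zm^2))\subseteq\sp(\sigma(\zm^1))$. In the inductive step, the support-projection $\vm^t=P_{\sp(\sigma(\zm^t))}(\um^t)$ already enforces $\sp(\sigma(\vm^t))\subseteq\sp(\sigma(\zm^t))$. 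Letting $\bar{\zm}^{t+1}=\vm^t-s\nabla g(\vm^t)$, Weyl's inequality gives $\sigma_i(\bar{\zm}^{t+1})\leq\sigma_1(s\nabla g(\vm^t))\leq sG\leq\sqrt{2\lambda s}$ for every $i\notin\sp(\sigma(\vm^t))$, so the hard-thresholding step $\zm^{t+1}=T_{\sqrt{2\lambda s}}(\bar{\zm}^{t+1})$ kills those coordinates; chaining gives $\sp(\sigma(\zm^{t+1}))\subseteq\sp(\sigma(\vm^t))\subseteq\sp(\sigma(\zm^t))$.

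The second, more delicate step is to show by simultaneous induction on $t$ that $\sp(\sigma(\zm^t))\subseteq\sp(\sigma(\xm^t))$ for all $t\geq 1$. The base case holds with equality since $\zm^1=\xm^1$. For the inductive step, there are two cases from the monotone selection rule. If $\xm^{t+1}=\zm^{t+1}$, then the containment holds trivially with equality. If instead $\xm^{t+1}=\xm^t$ (because $F(\zm^{t+1})>F(\xm^t)$), then combining the first claim with the inductive hypothesis gives $\sp(\sigma(\zm^{t+1}))\subseteq\sp(\sigma(\zm^t))\subseteq\sp(\sigma(\xm^t))=\sp(\sigma(\xm^{t+1}))$. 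This is exactly where the ``non-monotone'' accelerated proof does not directly apply, and the extra inductive coupling between the $\zm$ and $\xm$ sequences is needed.

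Finally, the $\xm$-claim $\sp(\sigma(\xm^{t+1}))\subseteq\sp(\sigma(\xm^t))$ follows immediately from the previous two ingredients. If $\xm^{t+1}=\xm^t$ it is trivial. If $\xm^{t+1}=\zm^{t+1}$, then $\sp(\sigma(\xm^{t+1}))=\sp(\sigma(\zm^{t+1}))\subseteq\sp(\sigma(\zm^t))\subseteq\sp(\sigma(\xm^t))$ by the first and second claims respectively. The main obstacle I expect is organizing the joint induction cleanly so that the $\zm$-claim, the coupling claim, and the $\xm$-claim all close consistently at each step — in particular one must be careful not to use the coupling claim for the current index when establishing it; the argument above sidesteps this by using the $\zm$-claim (already proven for all $t$) and the coupling hypothesis only at the previous index $t$.
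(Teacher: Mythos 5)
Your proposal is correct and follows essentially the same route as the paper: first establish $\sp(\sigma(\zm^{t+1}))\subseteq\sp(\sigma(\zm^{t}))$ via the support projection combined with the Weyl/hard-thresholding argument of Lemma~\ref{lem:1appen}, then transfer the property to $\{\xm^t\}$ through the conditional update. The only cosmetic difference is the bookkeeping invariant used for the transfer --- you carry the inclusion $\sp(\sigma(\zm^{t}))\subseteq\sp(\sigma(\xm^{t}))$ by induction, whereas the paper records that $\xm^{t}$ always equals some earlier iterate $\zm^{\bar t}$ with $\bar t\leq t$; the two are interchangeable here.
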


\begin{proof}
    We will prove the above lemma using mathematical induction. 

    It can be easily verified that $\sp(\sigma(\zm^2)) \subseteq \sp(\sigma(\zm^1))$. 
    
     Suppose  $\sp(\sigma(\zm^{t+1})) \subseteq \sp(\sigma(\zm^{t}))$ holds for all $t \leq t'$ with $t' \geq 1$, now consider the case that $t = t'+1$. With the similar thought process in the proof for Lemma~\ref{lem:4appen}, based on the update rule for $\wm^t$, the zero elements in $\sigma(\vm^{t'+1})$ remain unchanged in $\sigma(\zm^{t'+2})$, thus we have $\sp(\sigma(\zm^{t'+2})) \subseteq \sp(\sigma(\vm^{t'+1})) \subseteq \sp(\sigma(\zm^{t'+1}))$.

     Therefore, $\sp(\sigma(\zm^{t+1})) \subseteq \sp(\sigma(\zm^{t}))$ holds for all $t \geq 1$. 

We already show that for all $t \geq 1$, $\sp(\sigma(\xm^{t})) = \sp(\sigma(\zm^{\bar{t}}))$ for some $\bar{t} \leq t$. And based on the update rule for $\xm$, we have $\xm^{t+1} = \zm^{t+1}$ or $\xm^{t+1} = \xm^t$. 
If $\xm^{t+1} = \zm^{t+1}$, $\sp(\sigma(\xm^{t+1})) = \sp(\sigma(\zm^{t+1})) \subseteq \sp(\sigma(\zm^{\bar{t}})) = \sp(\sigma(\xm^{t}))$ since $\bar{t} \leq t < t+1$.
If $\xm^{t+1} = \xm^t$, it's easy to see  $\sp(\sigma(\xm^{t+1})) = \sp(\sigma(\xm^{t}))$.
Therefore, $\sp(\sigma(\xm^{t+1})) \subseteq \sp(\sigma(\xm^{t}))$ holds for all $t \geq 1.$
     
\end{proof}

\begin{theorem}\label{thm:4appen}
      Suppose $s \leq \min \{\frac{2\lambda}{G^2}, \frac{1}{L}\}$. and $\xm^*$ is a limit point of $\{\xm^{t}\}_{t=0}^{\infty}$ generated by Algorithm~\ref{alg:3}, 
  then there exists $t_0 \geq 1$ such that for all $m \geq t_0$, we have
    \begin{equation}
        F(\xm^{m+1}) - F(\xm^*) \leq \frac{4}{(m+1)^2}W^{t_0},
    \end{equation}
    where $W^{t_0}$ is a value defined as
    \begin{equation}\begin{split}
        W^{t_0} =  \frac{1}{2s} \|(\alpha^{t_0-1}-1)\xm^{t_0-1} - \alpha^{t_0-1}\zm^{t_0} +\xm^*\|_F^2 
         + (\alpha^{t_0-1})^2(F(\xm^{t_0})-F(\xm^*)).
    \end{split}\end{equation}
\end{theorem}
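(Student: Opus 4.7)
The plan is to mirror the architecture of the proof of Theorem~\ref{thm:3}, with the proximal iterate $\xm^{t+1}$ from Algorithm~\ref{alg:2} replaced by $\zm^{t+1}$ from Algorithm~\ref{alg:3}, and with the monotone selection rule $F(\xm^{t+1}) \leq F(\zm^{t+1})$ used to pass from $\zm^{t+1}$ back to $\xm^{t+1}$ on the left-hand side. First I would combine Lemma~\ref{lem:5} with the subsequence analysis of Lemma~\ref{lem:3appen} (which extends verbatim to the nested, non-increasing supports of $\{\zm^t\}$ and $\{\xm^t\}$) to pick $t_0 \geq 1$ past which $\sp(\sigma(\xm^t)) = \sp(\sigma(\zm^t)) = \sp(\sigma(\xm^*)) = S^*$. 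On this tail, $h$ is effectively linear along the relevant iterates, so repeating the derivation leading to Eq.~(\ref{eq:33}) with the prox update $\zm^{t+1} = \prox_h(\vm^t - s\nabla g(\vm^t))$ yields, for every $\wm$ with $\sp(\sigma(\wm)) = S^*$,
\begin{equation*}
    F(\zm^{t+1}) \leq F(\wm) + \tfrac{1}{s}\langle \zm^{t+1} - \vm^t, \wm - \vm^t\rangle - \tfrac{1}{2s}\|\zm^{t+1} - \vm^t\|_F^2.
\end{equation*}

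Next I would apply this inequality at $\wm = \xm^t$ and $\wm = \xm^*$, form the weighted sum $(\alpha^t - 1)\cdot[\wm = \xm^t] + 1 \cdot [\wm = \xm^*]$, and invoke $F(\xm^{t+1}) \leq F(\zm^{t+1})$ on the left. After multiplying through by $\alpha^t$, using the recursion identity $(\alpha^t)^2 - \alpha^t = (\alpha^{t-1})^2$, and completing the square exactly as between Eq.~(\ref{eq:49}) and Eq.~(\ref{eq:52}), I arrive at the one-step inequality
\begin{equation*}
    (\alpha^t)^2(F(\xm^{t+1}) - F(\xm^*)) - (\alpha^{t-1})^2(F(\xm^t) - F(\xm^*)) \leq \tfrac{1}{2s}\bigl(\|\cm^t\|_F^2 - \|\am^{t+1}\|_F^2\bigr),
\end{equation*}
with $\am^{t+1} := (\alpha^t - 1)\xm^t - \alpha^t \zm^{t+1} + \xm^*$ (matching the form inside $W^{t_0}$) and $\cm^t := (\alpha^t - 1)\xm^t - \alpha^t \vm^t + \xm^*$. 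The support-projection contraction used between Eq.~(\ref{eq:51}) and Eq.~(\ref{eq:52}) then upgrades this to $\|\cm^t\|_F \leq \|(\alpha^t - 1)\xm^t - \alpha^t \um^t + \xm^*\|_F$.

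The last step identifies the right-hand side with $\am^t := (\alpha^{t-1} - 1)\xm^{t-1} - \alpha^{t-1}\zm^t + \xm^*$. Expanding $\alpha^t \um^t$ from the momentum update in Eq.~(\ref{eq:mo1}) produces $\alpha^t \um^t = \alpha^{t-1}\xm^t + (\alpha^t-1)\zm^t - (\alpha^{t-1}-1)\xm^{t-1}$, from which a direct calculation gives
\begin{equation*}
(\alpha^t - 1)\xm^t - \alpha^t \um^t + \xm^* = \am^t - (\alpha^t - 1 - \alpha^{t-1})(\zm^t - \xm^t).
\end{equation*}
This residual vanishes whenever the monotone rule accepts the prox step (so $\xm^t = \zm^t$), and is where I expect the real work: I would use the monotone inequality $F(\xm^t) \leq F(\zm^t)$ to cancel the cross-term arising when this residual is squared inside the telescoped norm, or equivalently enlarge the Lyapunov function $\Phi^t = \tfrac{1}{2s}\|\am^t\|_F^2 + (\alpha^{t-1})^2(F(\xm^t) - F(\xm^*))$ with a nonnegative slack accounting for $\zm^t - \xm^t$. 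Once $\|\cm^t\|_F^2 \leq \|\am^t\|_F^2$ has been secured, summing the one-step inequality from $t_0$ to $m$ telescopes the $\am$-terms, and the usual bound $\alpha^m \geq (m+1)/2$ delivers the claimed $4/(m+1)^2$ rate with constant $W^{t_0}$.
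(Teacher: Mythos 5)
Your skeleton is the paper's own argument: establish the common tail support $S^*$, derive the two prox inequalities at $\wm=\xm^t$ and $\wm=\xm^*$ with $\zm^{t+1}$ as the prox output, take the $(\alpha^t-1,1)$ combination, use $F(\xm^{t+1})\le F(\zm^{t+1})$ on the left, multiply by $\alpha^t$, apply the projection contraction, and telescope. Up to the final identification your steps match the paper's Eqs.~(\ref{eq:62})--(\ref{eq:68}) exactly.

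The divergence is precisely the step you flag as ``where I expect the real work,'' and there your proposal has a genuine gap. You correctly compute that with Eq.~(\ref{eq:mo1}) as written, $(\alpha^t-1)\xm^t-\alpha^t\um^t+\xm^* = \am^t-(\alpha^t-1-\alpha^{t-1})(\zm^t-\xm^t)$, so the quantity you must telescope is not $\|\am^t\|_F^2$. Your proposed repairs do not close this. The monotone rule gives only the scalar inequality $F(\xm^t)\le F(\zm^t)$; it provides no bound on $\|\zm^t-\xm^t\|_F$ and no sign information on the cross term $\langle \am^t,\,\zm^t-\xm^t\rangle$, so it cannot ``cancel'' anything in the expanded square. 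Nor can you simply enlarge the Lyapunov function by a slack in $\zm^t-\xm^t$: in iterations where the prox step is rejected ($\xm^{t}=\xm^{t-1}\ne\zm^{t}$) the residual is genuinely nonzero and uncontrolled, and $\alpha^t-1-\alpha^{t-1}\to \tfrac12$ does not decay. The actual resolution is that the residual should be identically zero: with the standard MFISTA momentum coefficient $\tfrac{\alpha^{t-1}}{\alpha^t}$ on the $(\zm^t-\xm^t)$ term (in place of $\tfrac{\alpha^t-1}{\alpha^t}$ in Eq.~(\ref{eq:mo1})), one gets $\alpha^t\um^t=(\alpha^t-1)\xm^t+\alpha^{t-1}\zm^t-(\alpha^{t-1}-1)\xm^{t-1}$ and hence $(\alpha^t-1)\xm^t-\alpha^t\um^t+\xm^*=\am^t$ exactly. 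The paper's proof simply asserts this identity ``according to the update rule,'' i.e., it implicitly uses the corrected coefficient; your computation exposes that the theorem as stated does not follow from the algorithm as literally written, but the fix is to correct the coefficient, not to absorb the residual via monotonicity.
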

\begin{proof}
 Based on Lemma~\ref{lem:5appen},  $\{\xm^t\}_{t=0}^\infty$ forms at most $K_1 \leq |S|+1$ subsequences with shrinking support $\{\xt^k\}_{k=1}^{K_1}$, and  $\{\zm^t\}_{t=0}^\infty$ forms at most $K_2 \leq |S|+1$ subsequences with shrinking support $\{\zt^k\}_{k=1}^{K_2}$. Based on Lemma~\ref{lem:3appen}, there exists $t_1 \geq 0$ such that $\{\xm^t\}_{t=t_1}^\infty \subseteq \xt^{K_1}$, and there exists $t_2 \geq 0$ such that $\{\zm^t\}_{t=t_2}^\infty \subseteq \zt^{K_2}$. Let all the elements of $\sigma(\xt^{K_1})$ have support $S_1$, let all the elements of $\sigma(\zt^{K_2})$ have support $S_2$, we show that $S_1 = S_2$: let $t_0 = \max \{t_1, t_2\}$, then there exists $t' \geq t_0$ such that $\xm^{t'} = \zm^{t'}$, and due to the fact that $\{\xm^t\}_{t=t_1}^\infty \subseteq \xt^{K_1}$ and $\{\zm^t\}_{t=t_2}^\infty \subseteq \zt^{K_2}$, we have $S_1 = \sp(\sigma(\xm^{t'})) = \sp(\sigma(\zm^{t'})) = S_2$. 

 Let $S_1 = S_2 = S^*$, then the singular value vectors of all the elements of  $\{\xm^t\}_{t=t_0}^\infty$ and $\{\zm^t\}_{t=t_0}^\infty$ have the same support $S^*$, and $\sp(\sigma(\xm^*)) = S^*$.

Following the same process in the proof for Theorem~\ref{thm:3appen}, we get
    \begin{equation}\label{eq:62}
        F(\zm^{t+1}) \leq F(\xm^t) + \frac{1}{s} \langle \zm^{t+1} - \vm^t, \xm^t - \vm^t \rangle - (\frac{1}{s} - \frac{L}{2}) \|\zm^{t+1} - \vm^t\|^2_F,
    \end{equation}
    and
    \begin{equation}\label{eq:63}
        F(\xm^{t+1}) \leq F(\xm^*) + \frac{1}{s} \langle \zm^{t+1} - \vm^t, \xm^* - \vm^t \rangle - (\frac{1}{s} - \frac{L}{2}) \|\zm^{t+1} - \vm^t\|^2_F,
    \end{equation}
    and  $(\alpha^t-1) \times $Eq.~(\ref{eq:62}) + Eq.~(\ref{eq:63}), multiply both sides by $\alpha^t$, and use the fact that $(\alpha^t)^2 - \alpha^t = (\alpha^{t-1})^2$, we have
    \begin{equation}\begin{split}
        &(\alpha^t)^2(F(\xm^{t+1}) - F(\xm^*)) - (\alpha^{t-1})^2(F(\xm^{t}) - F(\xm^*))  \\
        \leq  &
        \frac{1}{2s}(\|(\alpha^t-1)\xm^t - \alpha^t \vm^t + \xm^* \|^2_F - \|(\alpha^{t}-1)\xm^t - \alpha^t \zm^{t+1} + \xm^* \|^2_F).
    \end{split}\end{equation}
    Based on the update rule for $\vm^t$, we have 
    \begin{equation}\begin{split}\label{eq:67}
       & (\alpha^t)^2(F(\zm^{t+1}) - F(\xm^*)) - (\alpha^{t-1})^2(F(\xm^{t}) - F(\xm^*)) \\
        \leq  &
        \frac{1}{2s}(\|(\alpha^t-1)\xm^t - \alpha^t \um^t + \xm^* \|^2_F - \|(\alpha^{t}-1)\xm^t - \alpha^t \zm^{t+1} + \xm^* \|^2_F).
    \end{split}\end{equation}

    Define $\am^{t+1} = (\alpha^t-1)\xm^t - \alpha^t \zm^{t+1} + \xm^*, \am^{t} = (\alpha^{t-1}-1)\xm^{t-1} - \alpha^{t-1} \zm^t + \xm^*$, we can get $\am^t = (\alpha^t-1)\xm^t - \alpha^t \um^t + \xm^*$, therefore,
    \begin{equation}\label{eq:68}
        (\alpha^t)^2(F(\zm^{t+1}) - F(\xm^*)) - (\alpha^{t-1})^2(F(\xm^{t}) - F(\xm^*)) 
        \leq 
        \frac{1}{2s}(\|\am^t\|^2_F - \| \am^{t+1}\|^2_F).
    \end{equation}
    Sum Eq.~(\ref{eq:68}) over $t = t_0, \dots, m$ for $m \geq t_0$, we have
    \begin{equation}
         (\alpha^m)^2(F(\zm^{m+1}) - F(\xm^*)) - (\alpha^{t_0-1})^2(F(\xm^{t_0}) - F(\xm^*)) 
        \leq 
        \frac{1}{2s}(\|\am^{t_0}\|^2_F - \| \am^{m+1}\|^2_F)
        \leq 
        \frac{1}{2s}\|\am^{t_0}\|^2_F,
    \end{equation}
    therefore, with $\alpha^t \geq \frac{t+1}{2}$, we get
    \begin{equation}
        F(\zm^{m+1}) - F(\xm^*) 
        \leq 
        \frac{4}{(m+1)^2}(\frac{1}{2s} \|\am^{t_0}\|^2_F + (\alpha^{t_0-1})^2(F(\xm^{t_0}) - F(\xm^*))),
    \end{equation}
    where $\am^{t_0} = (\alpha^{t_0-1}-1)\xm^{t_0-1} - \alpha^{t_0-1}\zm^{t_0} +\xm^*$.
\end{proof}


\end{document}